\newcommand{\ovl}{\overline}
\newcommand{\vp}{\varepsilon}
\newcommand{\cp}{M\rtimes_\alpha G}
\newcommand{\Ad}{\mathrm{Ad}\,}
\newcommand{\Aut}{\mathrm{Aut}\,}
\newcommand{\lam}{\ell}
\newcommand{\rartimes}{\rtimes_{\alpha, r}}
\newcommand{\artimes}{\rtimes_\alpha}
\newcommand{\K}{\mathcal{K}}
\newcommand{\Hil}{\mathcal{H}}
\newcommand{\B}{\mathcal{B}}
\newcommand{\Z}{\mathcal{Z}}
\newcommand{\vntensor}{\,\overline{\otimes}\,}
\numberwithin{equation}{section}
\theoremstyle{plain}
\newtheorem{lem}{Lemma}[section]
\newtheorem{pro}[lem]{Proposition}
\newtheorem{thm}[lem]{Theorem}
\newtheorem{cor}[lem]{Corollary}
\theoremstyle{definition}
\newtheorem{defn}[lem]{Definition}
\theoremstyle{remark}
\newtheorem{rem}[lem]{Remark}
\begin{document}

\null\vspace{.5in}
\setcounter{page}{1}
\begin{LARGE}
\begin{center}
A GALOIS CORRESPONDENCE FOR REDUCED  CROSSED PRODUCTS
OF UNITAL SIMPLE C$^*$-ALGEBRAS BY DISCRETE GROUPS
\end{center}
\end{LARGE}
\bigskip

\begin{center}
\begin{tabular}{cc}
{\large Jan Cameron}$^{(*)}$&{ \large Roger R. Smith}$^{(**)}$
\end{tabular}
\end{center}

\vfill

\begin{abstract}
Let a discrete group $G$ act on a unital simple C$^*$-algebra $A$ by outer automorphisms. We establish a Galois correspondence $H\mapsto A\rtimes_{\alpha,r}H$ between subgroups of $G$ and C$^*$-algebras $B$ satisfying $A\subseteq B \subseteq A\rtimes_{\alpha,r}G$, where 
$A\rtimes_{\alpha,r}G$ denotes the reduced crossed product. For a twisted dynamical system $(A,G,\alpha,\sigma)$, we also prove the corresponding result for the reduced twisted crossed product $A\rtimes^\sigma_{\alpha,r}G$.
\end{abstract}

\vfill

\noindent Key Words:\qquad C$^*$-algebra, group, crossed product, bimodule, reduced, twisted

\vfill

\noindent 2010 AMS Classification: 46L55, 46L40

\vfill

\noindent ($\ast$) JC was partially supported by Simons Collaboration Grant for Mathematicians \#319001

\noindent ($\ast\ast$) RS was partially supported by Simons Collaboration Grant for Mathematicians \#522375
\newpage

\section{Introduction}\label{intro}
The study of von Neumann algebras and C$^*$-algebras arising from groups and group actions is of central importance in operator algebra theory.  Of particular interest are the crossed product algebras, which date back to the earliest days of the subject when they were introduced by Murray and von Neumann \cite{MvN} to give examples of factors.  There are two parallel strands to the theory, one for von Neumann algebras  and one for C$^*$-algebras.  In both settings, one begins with a noncommutative dynamical system, i.e., an action $\alpha$ of a locally compact group $G$ on a C$^*$-algebra $A$ (respectively, a von Neumann algebra $M$), and constructs a larger C$^*$-algebra $A \rartimes G$ (respectively, a von Neumann algebra $M \artimes G$) by representing both the group and the underlying algebra on a common Hilbert space in such a way that the action is respected by the representation.  In this paper we consider the special case of crossed products arising from the action of a discrete group.

A continuing theme of investigation into discrete crossed products has been to ask to what extent the structure of a group can be recovered from the structure of an associated crossed product algebra.  One instance of this theme is the so-called Galois theory of operator algebras, which relates the structure of subalgebras of a crossed product algebra to the subgroups of its underlying acting group.  When a discrete group $G$ acts by outer automorphisms on a C$^*$-algebra (say, a unital, simple C$^*$-algebra $A$), any subgroup $H$ of $G$ will give rise to a C$^*$-subalgebra $B = A \rtimes_{\alpha, r} H$ of the reduced crossed product $A \rtimes_{\alpha, r} G$ containing $A$.  These are known as \emph{intermediate subalgebras} for the inclusion $A \subseteq A \rartimes G$, and we say that a \emph{Galois correspondence} holds for the inclusion if the map $H \mapsto A \rartimes H$ defines a bijection between subgroups of $G$ and intermediate subalgebras of $A \subseteq A \rartimes G$. 
 
 In the von Neumann algebra setting, work of Choda \cite{Cho78} and Izumi, Longo and Popa \cite{ILP} established a Galois correspondence for inclusions $M \subseteq M \artimes G$ associated to outer actions of discrete groups on separable factors.  
We extended those results to the nonseparable case in \cite{CaSm2}, as a corollary of more general structural results for w$^*$-closed $M$-modules in von Neumann crossed products $M \artimes G$.  In this paper, we examine the analogous problem for C$^*$-algebra crossed products, and our main result, Theorem \ref{intermediate}, establishes a C$^*$-algebraic version of the results in \cite{CaSm2}.  In particular, we obtain a Galois correspondence for inclusions $A \subseteq A \rartimes G$, where $G$ is a discrete group acting by outer automorphisms on a unital simple C$^*$-algebra $A$.   Prior results in this direction include those of Landstad, Olesen and Pedersen \cite{LOP} for abelian groups and  Izumi \cite{Iz} for finite groups.  We also mention the theorem of H. Choda \cite{Cho}, who established the Galois correspondence for discrete groups under the additional hypothesis of the existence of conditional expectations onto intermediate subalgebras of $A \subseteq A \rartimes G$.  Our main result is optimal for discrete crossed products in that it obtains the existence of such conditional expectations as a conclusion, and (as we note in the remarks following Theorem \ref{intermediate}) holds for the largest possible classes of group actions (the outer actions), and underlying C$^*$-algebras (the simple ones).

When $N$ is a normal subgroup of $G$, it is desirable to decompose $A\rtimes_{\alpha,r}G$ as an iterated crossed product by an action of $N$ and then by an action of the quotient group $G/N$ on $A\rtimes_{\alpha,r}N$. However, this is only possible in general when we expand to the larger class of reduced twisted crossed products with the extra ingredient of a cocycle, a map $\sigma$ of $G\times G$ into the unitary group $\mathcal{U}(A)$ of $A$, (see \cite[Theorem 2.1]{Bed}). We show that our results on reduced crossed products extend to include the twisted case.

We now describe the organization of the paper, noting that terminology will be defined in the subsequent sections. A primary goal is to obtain Proposition \ref{average}, which can be viewed as a norm-approximate type of averaging result. Because von Neumann algebras are closed in the $w^*$-topology, such averaging is most effective in this context, and so Section \ref{prelim} is devoted to preliminary lemmas that allow us to recast the necessary averaging arguments for $A$ in terms of those for $A^{**}$. In particular, we make use of two such von Neumann algebra theorems from \cite{ChSi} and \cite{HZ}, and the results of Section \ref{prelim} are designed to allow us to do so.

 In Section \ref{main} we present  our main results on  crossed products of unital simple C$^*$-algebras $A$ by 
outer actions of discrete groups $G$. Proposition \ref{supp} contains a discussion of norm closed $A$-bimodules (which include the intermediate C$^*$-algebras) in preparation for Theorem \ref{intermediate}, where we show that the intermediate C$^*$-algebras are all of the form $A\rtimes_{\alpha,r}H$ for subgroups $H\subseteq G$. We also include some consequences; for example, we recapture a theorem of Kishimoto \cite{Ki} that shows simplicity of $A\rtimes_{\alpha,r}G$ when $A$ is unital and simple and $G$ acts by outer automorphisms. In Section \ref{tcp}, we extend the results of Section \ref{main} to reduced twisted crossed products. The ideas are essentially those of Section \ref{main}, but the following  extra ingredients are now required. We  transfer a result of Packer and Raeburn \cite[Lemma 3.3]{PR} on exterior equivalence for full twisted C$^*$-algebra crossed products to the setting of twisted von Neumann algebra crossed products
(Lemma \ref{exterior}) and we include a more general version of a theorem of Sutherland \cite[Theorem 5.1]{Sut} which untwists a twisted crossed product of a von Neumann algebra by tensoring by a copy of $\B(\ell^2(G))$ (Lemma \ref{untwist}). These allow us 
to extend our methods for the untwisted case in Section \ref{main}, and to establish our Galois result for reduced twisted crossed products in Theorem \ref{maintw}.

\section{Preliminary lemmas}\label{prelim}

In this section we present a sequence of preliminary results to be used in proving Proposition \ref{average} which in turn leads to our first main result in Theorem \ref{intermediate}. In general terms, they can
be described as investigating various types of averaging in the central summands of $A^{**}$. Ultimately, we wish to apply averaging within a C$^*$-algebra $A$ (Proposition \ref{average}) but our
methods require us to do this first in $A^{**}$. Lemma \ref{WtoN} will allow us to pass back and forth between these two situations.
An important technical tool for our work is to show that, in appropriate circumstances, 0 is in the closure of the sets introduced in Definition \ref{defW}. The germ of this idea for C$^*$-algebras originates in the work of Elliott \cite[Theorem 2.3]{Ell}.

\begin{defn}\label{defW}

Let $\alpha$ be a $*$-automorphism of a unital C$^*$-algebra $A$. For $x\in A$, we define a convex set by

\begin{equation}\label{W}
W_A(x,\alpha)=\left\{\sum_{i=1}^n a_i^*x\alpha(a_i):n\geq 1,\ a_i\in A,\ \sum_{i=1}^n a_i^*a_i=1\right\}\subseteq A.
\end{equation}

\end{defn}

\begin{lem}\label{WtoN}
Let $\alpha$ be a $*$-automorphism of a unital C$^*$-algebra $A$, and let $x\in A$. 
\begin{itemize}
\item[(i)] If $A$ is represented nondegenerately on a Hilbert space $\Hil$ and $\alpha$ can be extended to a $*$-automorphism $\tilde\alpha$ of $A''$, then
$\overline{W_A(x,\alpha)}^{w^*}=\overline{W_{A''}(x,\tilde\alpha)}^{w^*}$.
\item[(ii)] If $0\in \overline{W_{A^{**}}(x,\alpha^{**})}^{w^*}$, then $0\in \overline{W_A(x,\alpha)}^{\|\cdot\|}$.
\end{itemize} 
\end{lem}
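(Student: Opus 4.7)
The proof naturally splits into establishing (i) first and then deriving (ii). In (i), the inclusion $\overline{W_A(x,\alpha)}^{w^*} \subseteq \overline{W_{A''}(x,\tilde\alpha)}^{w^*}$ is immediate because $W_A(x,\alpha) \subseteq W_{A''}(x,\tilde\alpha)$ and $\tilde\alpha$ extends $\alpha$. The content is the reverse inclusion: given $y = \sum_{i=1}^n b_i^* x\tilde\alpha(b_i)$ with $b_i \in A''$ and $\sum_i b_i^* b_i = 1$, I would exhibit a net in $W_A(x,\alpha)$ that $w^*$-converges to $y$.

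My strategy is to couple Kaplansky density with a functional-calculus correction that enforces the normalization $\sum a_i^* a_i = 1$. Since $\|b_i\| \le 1$, Kaplansky density produces nets $a_{i,\lambda} \in A$ with $\|a_{i,\lambda}\|\le 1$ and $a_{i,\lambda} \to b_i$ in the strong-$*$ topology. Writing $s_\lambda = \sum_i a_{i,\lambda}^* a_{i,\lambda}$, one has $s_\lambda \to 1$ strong-$*$ but typically $s_\lambda \neq 1$. To repair this, I choose continuous functions $f,g:[0,\infty)\to[0,\infty)$ satisfying $f(t)^2 t + g(t)^2 \equiv 1$ (for instance $f(t)=1$ on $[0,1]$, $f(t)=t^{-1/2}$ on $[1,\infty)$, and $g(t) = (\max\{0,1-t\})^{1/2}$) and set
\[
\tilde a_{i,\lambda} = a_{i,\lambda} f(s_\lambda) \in A \quad (i=1,\dots,n), \qquad \tilde a_{0,\lambda} = g(s_\lambda) \in A.
\]
Then $\sum_{i=0}^n \tilde a_{i,\lambda}^* \tilde a_{i,\lambda} = f(s_\lambda) s_\lambda f(s_\lambda) + g(s_\lambda)^2 = 1$, so $z_\lambda := \sum_{i=0}^n \tilde a_{i,\lambda}^* x\alpha(\tilde a_{i,\lambda})$ belongs to $W_A(x,\alpha)$. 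Because $*$-automorphisms of von Neumann algebras are strong-$*$ continuous on bounded sets and continuous functional calculus respects strong-$*$ limits on bounded self-adjoint nets, $f(s_\lambda)\to 1$ and $g(s_\lambda)\to 0$ strong-$*$; joint strong-$*$ continuity of multiplication on bounded sets then yields $z_\lambda \to y$ strong-$*$, hence in $w^*$.

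For (ii), I take $A$ in its universal representation on $\Hil_u$, where $A'' = A^{**}$ and the canonical extension of $\alpha$ coincides with $\alpha^{**}$. Part (i) then gives $\overline{W_A(x,\alpha)}^{w^*} = \overline{W_{A^{**}}(x,\alpha^{**})}^{w^*}$ inside $A^{**}$, so the hypothesis places $0$ in the $w^*$-closure of $W_A(x,\alpha)$ in $A^{**}$. The $w^*$-topology on $A^{**}$ restricts on $A$ to the weak topology $\sigma(A,A^*)$, so $0$ lies in the weak closure of $W_A(x,\alpha)$ in $A$. A direct check (rescaling $a_i\mapsto\sqrt{t}\,a_i$ and $b_j\mapsto\sqrt{1-t}\,b_j$ in two defining tuples) shows that $W_A(x,\alpha)$ is convex, so Mazur's theorem equates its weak and norm closures and delivers the conclusion.

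The main obstacle is the normalization step in (i): plain Kaplansky density approximates the $b_i$ but does not preserve $\sum a_i^*a_i = 1$, and the functional-calculus correction above is the precise device that restores this constraint while keeping the approximants in $A$ and preserving strong-$*$ convergence on bounded nets. Once (i) is secured, the deduction of (ii) via Mazur's theorem is essentially formal.
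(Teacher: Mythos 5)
Your proof is correct and follows essentially the same route as the paper's: Kaplansky density plus a correction that restores the constraint $\sum a_i^*a_i=1$ for part (i) (the paper applies Kaplansky to the whole column to get $C_\lambda^*C_\lambda\leq 1$ and appends the defect $(1-C_\lambda^*C_\lambda)^{1/2}$, where you rescale by $f(s_\lambda)$ and append $g(s_\lambda)$ --- equivalent devices), and convexity plus Hahn--Banach/Mazur for part (ii). No gaps.
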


\begin{proof}
(i). Clearly $\overline{W_A(x,\alpha)}^{w^*}\subseteq \overline{W_{A''}(x,\tilde\alpha)}^{w^*}$. To prove the reverse containment consider an element $y_0\notin \overline{W_A(x,\alpha)}^{w^*}$; we will show that $y_0\notin \overline{W_{A''}(x,\tilde\alpha)}^{w^*}$.
Since $\overline{W_A(x,\alpha)}^{w^*}$ is convex and $w^*$-compact, there exist vectors $\xi_1,\ldots,\xi_n,\eta_1,\ldots,\eta_n\in \Hil$ and $\vp >0$ so that the linear functional $\phi(\cdot)=\sum_{i=1}^n\langle \cdot \xi_i,\eta_i\rangle $ on $A''$ satisfies $\mathrm{Re}\,\phi(y-y_0)\geq \vp$ for $y\in \overline{W_A(x,\alpha)}^{w^*}$, by Hahn-Banach separation. If $y_1\in W_{A''}(x,\tilde\alpha)$, then there exist $m_1,\ldots,m_k\in A''$ so that $\sum_{i=1}^km_i^*m_i=1$ and $y_1=\sum_{i=1}^km_i^*x\tilde\alpha(m_i)$.
If we view these operators $m_i\in A''$  as the entries of a column matrix $C$ in $A''
\,\ovl{\otimes}\,\mathcal{B}(\ell^2(\mathbb{N}))$, then the Kaplansky density theorem gives a net $\{C_\lambda\}_{\lambda\in \Lambda}$ of finitely nonzero columns with entries from $A$
with $\|C_\lambda\|\leq 1$ and
$\lim_{\lambda\in\Lambda}C_\lambda=C$ strongly. Since $C^*C=1$, we see that $\|C\zeta\|=\|\zeta\|$ for all $\zeta\in\Hil$, and so $\lim_{\lambda}\|C_\lambda \zeta\|=\|\zeta\|$. From this
it follows that $\lim_{\lambda}
\|(1-C_{\lambda}^*C_\lambda)^{1/2}\zeta\|=0$. If we modify $C_\lambda$ by replacing a 0 entry by $(1-C_{\lambda}^*C_\lambda)^{1/2}$, then we may further assume that
$C_{\lambda}^*C_\lambda=1$. Since $*$-automorphisms of von Neumann algebras are strongly continuous on norm bounded sets, it follows that
$\lim_{\lambda}\alpha\otimes\mathrm{id}(C_\lambda)=
\tilde\alpha\otimes\mathrm{id}(C)$ strongly, and we see that $y_1$ is the strong limit of a net from $W_A(x,\alpha)$. Thus $\mathrm{Re} \,\phi(y_1-y_0)\geq \vp$, and so $y_0\notin \overline{W_{A''}(x,\tilde\alpha)}^{w^*}$. This proves (i).

(ii). Assume that $A$ is in its universal representation on a Hilbert space $\Hil$, so that $A^{**}=A''$. To arrive at a contradiction, suppose that $ 0\in \overline{W_{A^{**}}(x,\alpha^{**})}^{w^*}$ but that $0\notin \overline{W_A(x,\alpha)}^{\|\cdot\|}$. By the
separation form of the Hahn-Banach theorem, there exists $\phi\in A^*$ so that
\begin{equation}\label{WtoN.1}
\mathrm{Re}\,\phi(y)\geq 1,\ \ \ y\in W_A(x,\alpha).
\end{equation}
In this representation, each bounded linear functional on $A$ is a vector functional, so there exist $\xi,\eta\in \Hil$ so that
\begin{equation}\label{WtoN.2}
\mathrm{Re}\,\langle y\xi,\eta\rangle=\mathrm{Re}\,\phi(y)\geq 1,\ \ \ y\in W_A(x,\alpha).
\end{equation}
 Then this inequality also holds for $y\in \overline{W_A(x,\alpha)}^{w^*}$ by $w^*$-continuity of the vector functional $\langle \cdot\,\xi,\eta\rangle$, and part (i) shows that $
0\notin \overline{W_{A^{**}}(x,\alpha^{**})}^{w^*}$, a contradiction that proves (ii).
\end{proof}

The next two results are designed for use in Lemma \ref{no2-1}, and prepare the ground for applying a theorem of Haagerup and Zsid\'{o} \cite{HZ} (see \cite{Hal,Str,SZ} for related work).

\begin{lem}\label{Zmap}
Let $A$ be a unital simple C$^*$-algebra represented nondegenerately on a Hilbert space $\Hil$, let $M=A''$, let $J\subseteq M$ be a norm closed two sided ideal satisfying $J\cap
\Z(M)=\{0\}$, and let $u\in M$ be a unitary that implements a $*$-automorphism $\alpha$ of $A$. Suppose that there exist elements $z\in \Z(M)$, $j\in J$, and $a\in A$ so that
\begin{equation}\label{Zmap.1}
\|z+j+au\|<\|z\|.
\end{equation}
Then $\alpha$ is inner.
\end{lem}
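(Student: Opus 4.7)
The plan is to prove the contrapositive: assume $\alpha$ is \emph{not} inner on $A$, and derive a contradiction from the hypothesis $\|z+j+au\|<\|z\|$.

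First I would pass to the quotient modulo $J$. Because $J\cap\Z(M)=\{0\}$, the quotient $*$-homomorphism $\pi:M\to M/J$ is injective, hence isometric, on $\Z(M)$, so $\|\pi(z)\|=\|z\|$. The hypothesis therefore reads
\[
\|\pi(z)+\pi(a)\pi(u)\|=\|\pi(z+j+au)\|\leq\|z+j+au\|<\|z\|=\|\pi(z)\|.
\]

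Second, I would invoke the outerness of $\alpha$ via the key averaging: for outer $\alpha$ on a unital simple C$^*$-algebra and any $a\in A$, $0\in\overline{W_A(a,\alpha)}^{\|\cdot\|}$ (the Kishimoto/Elliott-type averaging that the $W_A$-framework of Definition \ref{defW} is designed to access). Granting this, for each $\eta>0$ choose $a_1,\ldots,a_n\in A$ with $\sum a_i^*a_i=1$ and $\|s\|<\eta$, where $s:=\sum a_i^*a\,\alpha(a_i)$. Define the unital, completely positive (hence contractive) map $T(y):=\sum a_i^*ya_i$ on $M$. Centrality of $z$ gives $T(z)=z$; the ideal property gives $T(j)\in J$; and the commutation $ua_i=\alpha(a_i)u$ produces $T(au)=su$. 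Consequently
\[
T(z+j+au)=z+T(j)+su,\qquad T(j)\in J,\ \|s\|<\eta.
\]

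Third, applying $\pi$ and using $\|T\|\leq 1$,
\[
\|\pi(z)+\pi(s)\pi(u)\|=\bigl\|\pi\bigl(T(z+j+au)\bigr)\bigr\|\leq\|T(z+j+au)\|\leq\|z+j+au\|.
\]
The reverse triangle inequality combined with $\|\pi(s)\pi(u)\|\leq\|s\|<\eta$ gives $\|\pi(z)+\pi(s)\pi(u)\|\geq\|z\|-\eta$. Therefore $\|z\|-\eta\leq\|z+j+au\|$ for every $\eta>0$, so $\|z\|\leq\|z+j+au\|$, contradicting the strict hypothesis.

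The main obstacle is the averaging input—producing $a_i\in A$ with $\sum a_i^*a_i=1$ and $\|\sum a_i^*a\,\alpha(a_i)\|$ arbitrarily small from the outerness of $\alpha$. This is precisely the statement $0\in\overline{W_A(a,\alpha)}^{\|\cdot\|}$, and in the paper's scheme it is obtained from the Haagerup--Zsid\'{o} theorem applied to $\alpha^{**}$ on $A^{**}$ via Lemma \ref{WtoN}. Once that averaging is available, the reduction to $M/J$, the UCP identity $T(au)=su$, and the reverse triangle estimate are routine.
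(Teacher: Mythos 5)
Your reduction is clean and, granting the averaging input, the rest of the argument is correct: the map $T(y)=\sum a_i^*ya_i$ is unital completely positive, fixes $\Z(M)$, preserves $J$, and sends $au$ to $su$ with $\|s\|$ small, so the reverse triangle inequality in $M/J$ (where $\pi$ is isometric on $\Z(M)$) forces $\|z+j+au\|\geq\|z\|$. The problem is that the averaging input is not available at this point in the paper: the statement $0\in\overline{W_A(a,\alpha)}^{\|\cdot\|}$ for outer $\alpha$ on a unital simple C$^*$-algebra is precisely Proposition \ref{average}, and the paper's proof of Proposition \ref{average} runs through Corollary \ref{Zmap2}, Lemma \ref{no2-1} and Lemma \ref{lem2.12}, all of which rest on Lemma \ref{Zmap} itself. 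Indeed, the Haagerup--Zsid\'{o} theorem you invoke is applied in Lemma \ref{no2-1} only after one has in hand a unital positive $\Z(M)$-bimodule map $\Phi$ with $\Phi|_{J+Au}=0$, and the existence of such a $\Phi$ is exactly Corollary \ref{Zmap2}, which is nothing but the contractive-projection reformulation of the inequality $\|z+j+au\|\geq\|z\|$ you are trying to prove. So as written the argument is circular; it would only become a proof if you supplied an independent derivation of the averaging statement (e.g.\ Kishimoto's original argument), which is essentially the content of the entire preliminary section.

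The paper's own proof of Lemma \ref{Zmap} is a direct argument of a quite different flavor. After reducing (Cases 2--4) to the situation $z=1$, $a\geq 0$ by polar decompositions and a central spectral projection, it observes that $\|\pi(u^*)+\pi(a)\|<1$ forces the spectrum of $-\pi(u^*)$ into the open right half of the circle, takes the analytic logarithm $b=\log(-\pi(u^*))$, and exponentiates the inner derivation $\delta=[\,\cdot\,,b]$ to realize $\Ad(\pi(u))$; Sakai's theorem that derivations of simple C$^*$-algebras are inner then produces an implementing unitary inside $A$, and simplicity ($A\cap J=\{0\}$) transfers innerness from $M/J$ back to $A$. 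You should either reproduce an argument of that kind or give a self-contained proof of the averaging lemma before using it here.
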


\begin{proof}
We examine several cases of \eqref{Zmap.1}.

\medskip

\noindent Case 1:\quad $z=1$ and $a\geq 0$.

\medskip

It follows from \eqref{Zmap.1} that
\begin{equation}\label{Zmap.2}
\|u^*+ju^*+a\|<1.
\end{equation}
Let $\pi:M\to M/J$ be the quotient map, and note that $\pi|_A$ is an isomorphism since $A$ is simple. Then \eqref{Zmap.2} gives
\begin{equation}\label{Zmap.3}
\|\pi(u^*)+\pi(a)\|<1
\end{equation}
in $M/J$. Fix $\lambda$ in the spectrum $\sigma(\pi(u^*))$ and choose a state $\phi$ on $M/J$ such that $\phi(\pi(u^*))=\lambda$. Then, from \eqref{Zmap.3},
\begin{equation}\label{Zmap.4}
|\lambda+\phi(\pi(a))|<1.
\end{equation}
Since $\phi(\pi(a))\geq 0$, we conclude that $\mathrm{Re}\, \lambda <0$ otherwise \eqref{Zmap.4} is contradicted. Thus
\begin{equation}\label{Zmap.5}
\sigma(-\pi(u^*))\subseteq\{e^{i\theta}:-\pi/2<\theta<\pi/2\},
\end{equation}
so we can define $b=\log(-\pi(u^*))\in M/J$ using the standard analytic branch of the logarithm. Let $\delta$ be the inner derivation of $M/J$ defined by
\begin{equation}\label{Zmap.6}
\delta(\pi(m))=\pi(m)b-b\pi(m),\ \ \ m\in M.
\end{equation}
By \cite[Lemmas 18.8, 18.14]{BD}, $e^\delta$ is the automorphism $\Ad(-\pi(u))=\Ad(\pi(u))\in\Aut(M/J)$. Moreover, $\delta$ is the norm limit of polynomials in $\Ad(\pi(u))$ and consequently
maps $\pi(A)$ to itself. By \cite{Sak}, $\delta|_{\pi(A)}$ is implemented by an element $\pi(t)\in \pi(A)$ for some $t\in A$. Since $\alpha$ maps $J$ to $J$, it induces an automorphism $\tilde{\alpha}$ of
$M/J$, and we have $\tilde{\alpha}=\Ad(e^{\pi(t)})$. If $v$ is the unitary in the polar decomposition in $M$ of $e^t$, then $v\in A$ since $e^t$ is invertible in $A$ and $\pi(v)$
implements $\tilde{\alpha}$. Thus, for $x\in A$,
\begin{equation}\label{Zmap.7}
vxv^*-uxu^*\in J,
\end{equation}
and so $\alpha=\Ad v$ since $A\cap J=\{0\}$ by simplicity of $A$. Thus $\alpha$ is an inner automorphism of $A$.

\medskip

\noindent Case 2:\quad $z=1$ and no restriction on $a$.

\medskip

\noindent Since
\begin{equation}\label{Zmap.8}
\|1+j+au\|<1
\end{equation}
from \eqref{Zmap.1}, $\pi(au)$ is invertible in $M/J$ so $\pi(a)$ is invertible in $M/J$. Thus $\pi(a)$ is invertible in $\pi(A)$ so simplicity of $A$ shows  that $a$ is invertible in $A$.
The polar decomposition $a=(aa^*)^{1/2}v$ then occurs in $A$ so $v\in A$. From \eqref{Zmap.8},
\begin{equation}\label{Zmap.9}
\|1+j+(aa^*)^{1/2}vu\|<1,
\end{equation}
and $vu$ implements an automorphism of $A$. By Case 1, with $(aa^*)^{1/2}$ and $vu$ replacing respectively $a$ and $u$, we see that $\Ad vu$ is inner on $A$ and so $\alpha=\Ad u$ is also
inner on $A$.

\medskip

\noindent Case 3:\quad $z\geq 0$ and no restriction on $a$.

\medskip

\noindent From \eqref{Zmap.1}, it is clear that $z\ne 0$, so by scaling we may assume that $\|z\|=1$. Then \eqref{Zmap.1} becomes
\begin{equation}\label{Zmap.10}
\|z+j+au\|<1.
\end{equation}
Choose $\vp>0$ to be so small that
\begin{equation}\label{Zmap.11}
\|z+j+au\|+\vp<1,
\end{equation}
 let $p_\vp\in \Z(M)$ be the spectral projection of $z$ for the interval $[1-\vp,1]$ and note that $p_\vp \notin J$ since $J\cap\Z(M)=\{0\}$ by hypothesis. Then $Jp_\vp \cap \Z(Mp_\vp)=\{0\}$ and
\begin{align}
\|p_\vp+j p_\vp  +aup_\vp\|&=\|(z+j+au)p_\vp +(1-z)p_\vp\|\notag\\
&\leq \|z+j+au\|+\vp<1.\label{Zmap.12}
\end{align}

Simplicity of $A$ implies that $x\mapsto xp_\vp$ represents $A$ faithfully on $p_\vp \Hil$ and $up_\vp$ is a unitary in $Mp_\vp$ implementing an automorphism of $Ap_\vp$. By Case 2 there
exists a unitary $v\in A$ so that $\Ad vp_\vp=\Ad up_\vp$ on $Ap_\vp$. By faithfulness of the representation, $\Ad v=\Ad u$ on $A$ and so $\alpha$ is inner.

\medskip

\noindent Case 4:\quad the general case.

\medskip

\noindent Since $\Z(M)$ is an abelian von Neumann algebra, there exists a unitary $v\in \Z(M)$ so that $z=|z|v$. Then
\begin{equation}\label{Zmap.13}
\|\,|z|+jv^*+auv^*\|<\|z\|=\|\,|z|\,\|,
\end{equation}
and we are now in Case 3 with $|z|$, $jv^*$ and $uv^*$ replacing respectively $z$, $j$ and $u$. Since $uv^*$ also implements $\alpha$, we conclude that $\alpha$ is inner, completing the
proof.
\end{proof}

\begin{cor}\label{Zmap2}
Let $A$ be a unital simple C$^*$-algebra represented nondegenerately on a Hilbert space $\Hil$, let $M=A''$, let $J\subseteq M$ be a norm closed two sided ideal satisfying
$J\cap\Z(M)=\{0\}$, and let $u\in M$ be a unitary that implements an outer $*$-automorphism of $A$. Then there exists a unital completely positive $\Z(M)$-bimodular map $\Phi:M\to \Z(M)$
such that
\begin{equation}\label{Zmap2.1}
\Phi|_{(J+Au)}=0.
\end{equation}
\end{cor}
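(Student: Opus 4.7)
The plan is to apply Lemma \ref{Zmap} to define $\Phi$ initially on a subspace of $M$ and then extend to all of $M$ by standard operator-algebraic machinery.

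First, because $\alpha$ is outer, the contrapositive of Lemma \ref{Zmap} yields the key inequality
\begin{equation*}
\|z + j + au\| \geq \|z\|, \qquad z \in \Z(M),\ j \in J,\ a \in A.
\end{equation*}
This inequality makes well-defined the unital linear map
\begin{equation*}
\phi_0 : V := \Z(M) + J + Au \longrightarrow \Z(M), \qquad z + j + au \mapsto z,
\end{equation*}
because any two representations of the same element of $V$ must have the same $\Z(M)$-part. The same inequality shows that $\phi_0$ is contractive, and by construction $\phi_0|_{\Z(M)} = \mathrm{id}$ and $\phi_0|_{J + Au} = 0$.

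Next I would extend $\phi_0$ to a UCP map $\Phi : M \to \Z(M)$. Since $\Z(M)$ is commutative, a theorem of Smith gives $\|\phi_0\|_{cb} = \|\phi_0\| = 1$, so $\phi_0$ is unital and completely contractive. Paulsen's dilation theorem then extends $\phi_0$ to a UCP map on the operator system $V + V^*$, and Arveson's extension theorem, using the injectivity of the abelian von Neumann algebra $\Z(M)$, extends this further to a UCP map $\Phi : M \to \Z(M)$. In particular, $\Phi|_{J + Au} = 0$.

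Finally, the $\Z(M)$-bimodularity of $\Phi$ is automatic: since $\Phi|_{\Z(M)} = \mathrm{id}$, we have $\Phi(z^* z) = z^* z = \Phi(z)^* \Phi(z)$ for $z \in \Z(M)$, so $\Z(M)$ lies in the multiplicative domain of $\Phi$, yielding $\Phi(zm) = z\Phi(m) = \Phi(m)z = \Phi(mz)$ for all $z \in \Z(M)$ and $m \in M$. The main obstacle is the extension step, where the non-self-adjointness of the initial subspace $V$ forces one to pass through the Paulsen dilation before invoking Arveson's theorem via the injectivity of $\Z(M)$.
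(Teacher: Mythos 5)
Your proposal is correct and follows essentially the same route as the paper: the inequality from Lemma \ref{Zmap} defines a unital contraction $z+j+au\mapsto z$ on $\Z(M)+J+Au$, one checks complete contractivity using commutativity of the range (the paper runs the character argument directly rather than citing Smith's theorem), and injectivity of $\Z(M)$ gives the UCP extension, with bimodularity following since $\Phi$ restricts to the identity on $\Z(M)$.
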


\begin{proof}
Since $\Ad u|_A$ is not inner, Lemma \ref{Zmap} shows that
\begin{equation}\label{Zmap2.2}
\|z+j+au\|\geq \|z\|,\ \ \ z\in \Z(M),\ \ j\in J,\ \ a\in A.
\end{equation}
Thus there is a unital contraction $\Psi:\Z(M)+J+Au\to \Z(M)$ given by
\begin{equation}\label{Zmap2.3}
\Psi(z+j+au)=z,\ \ \ z\in \Z(M),\ \ j\in J,\ \ a\in A.
\end{equation}
Since $\Z(M)$ is abelian, norms in $\mathbb{M}_n(\Z(M))$ are calculated by applying characters $\omega$ to the entries of matrices. Each $\omega\circ\Psi$ is a contractive unital linear functional on $\Z(M)+J+Au$ so extends to a state on $M$, and thus is completely contractive. From this we see that $\Psi$ is a complete contraction so, by injectivity of $\Z(M)$, it extends to a completely contractive unital map $\Phi:M\to \Z(M)$ which is hence completely positive. Since $\Phi|_{\Z(M)}=\mathrm{id}$, $\Phi$ is a conditional expectation onto $\Z(M)$ and so is $\Z(M)$-bimodular. By construction, $\Phi|_{(J+Au)}=0$, completing the proof.
\end{proof}

Recall that an automorphism $\beta$ of a von Neumann algebra $M$ is {\it{properly outer}} if there does not exist a nonzero $\beta$-invariant central projection so that $\beta|_{Mz}$ is inner.
If $\alpha$ is an  automorphism of a C$^*$-algebra $A$, then it lifts to an automorphism $\alpha^{**}$ of $A^{**}$. The next lemma addresses the central summand on which $\alpha^{**}$ is
properly outer, while Lemmas \ref{no2-1} and \ref{lem2.12} apply respectively to the purely infinite and finite central summands on which $\alpha^{**}$ is inner. The latter requires the special cases of a
finite factor in Lemma \ref{2-1factor} and of a separably acting II$_1$ von Neumann algebra in Lemma \ref{lem2.10}. Phillips \cite{Ph} has given an example of an outer
automorphism of C$^*_r(\mathbb{F}_2)$ that is implemented by a unitary in $L(\mathbb{F}_2)$, so all three of these central summands just discussed may well be present.

We note that we are using the term {\it outer} automorphism to mean one that is not inner, while some papers have slightly different formulations. For example, Kishimoto \cite{Ki} expresses his results in terms of the condition that the strong Connes spectrum (from \cite{Ki0}) should not be $\{1\}$. In the case of a unital simple C$^*$-algebra, this is equivalent to outerness in our sense using results from \cite{Ki0,Ole}.

\begin{lem}\label{propouter}
Let $A$ be a unital  C$^*$-algebra represented nondegenerately on a Hilbert space $\Hil$ and let $M=A''$. If $\alpha$ is a $*$-automorphism of $A$ that extends to a properly outer
$*$-automorphism $\tilde\alpha$ of $M$, then $0\in \overline{W_A(x,\alpha)}^{w^*}$ for all $x\in A$.
\end{lem}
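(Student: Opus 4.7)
The plan is to first invoke Lemma \ref{WtoN}(i) to pass from $A$ to $M=A''$: that result gives $\overline{W_A(x,\alpha)}^{w^*}=\overline{W_M(x,\tilde\alpha)}^{w^*}$, so it suffices to show $0\in\overline{W_M(x,\tilde\alpha)}^{w^*}$ under the hypothesis that $\tilde\alpha$ is properly outer on $M$.

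Set $\Omega=\overline{W_M(x,\tilde\alpha)}^{w^*}$. A direct Cauchy--Schwarz-type estimate using $\sum a_i^*a_i=1$ shows $\|\sum a_i^*x\tilde\alpha(a_i)\|\leq\|x\|$, so $\Omega$ is convex and $w^*$-compact, lying in the $\|x\|$-ball of $M$. For any finite family $\{b_j\}\subseteq M$ with $\sum b_j^*b_j=1$, the $w^*$-continuous contraction $T_b\colon y\mapsto\sum b_j^*\,y\,\tilde\alpha(b_j)$ satisfies $T_b(\Omega)\subseteq\Omega$: applying $T_b$ to a generic $\sum a_i^*x\tilde\alpha(a_i)\in W_M(x,\tilde\alpha)$ yields $\sum(a_ib_j)^*x\tilde\alpha(a_ib_j)$, and $\sum(a_ib_j)^*(a_ib_j)=\sum b_j^*b_j=1$.

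To conclude $0\in\Omega$, I would argue by contradiction. Assuming $0\notin\Omega$, the target is to produce a nonzero $y_0\in\Omega$ with $T_u(y_0)=y_0$ for every $u\in\U(M)$; that relation rewrites as $y_0\tilde\alpha(u)=uy_0$, and extending linearly over $\U(M)$ (which spans $M$) gives $y_0\tilde\alpha(m)=my_0$ for every $m\in M$. Proper outerness of $\tilde\alpha$, via a standard polar-decomposition argument, forbids such a nonzero intertwiner: a nonzero $y_0$ would manufacture a nonzero $\tilde\alpha$-invariant central projection $z\in\Z(M)$ on which $\tilde\alpha$ is inner. This forces $y_0=0$, contradicting $0\notin\Omega$. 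The $y_0$ itself can be extracted either by a Zorn/minimality argument on the family of nonempty $w^*$-compact convex $T_b$-invariant subsets of $\Omega$ combined with a Ryll--Nardzewski style fixed-point theorem applied to the noncontracting isometric action of $\U(M)$ (each $T_u$ is an affine isometry of $M$, since $\|T_u(y)-T_u(y')\|=\|u^*(y-y')\tilde\alpha(u)\|=\|y-y'\|$), or by appealing directly to a known averaging theorem for properly outer automorphisms of von Neumann algebras, in the spirit of the Haagerup--Zsid\'o \cite{HZ} or Christensen--Sinclair \cite{ChSi} results flagged in the introduction.

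The main obstacle is precisely this fixed-point/averaging step. The semigroup $\{T_b\}$ is noncommutative (composition gives $T_b\circ T_c=T_d$ with $d_{ji}=c_jb_i$), and $\Omega$ is $w^*$- rather than weakly-compact in $M$, so one must either select the correct form of the Ryll--Nardzewski theorem compatible with $w^*$-compactness or instead quote an off-the-shelf von Neumann algebraic averaging principle. In every formulation, proper outerness of $\tilde\alpha$ enters in exactly one place: as the obstruction to the existence of a nonzero $(\mathrm{id},\tilde\alpha)$-intertwiner in $M$.
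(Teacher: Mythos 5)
Your reduction to $M=A''$ via Lemma \ref{WtoN}(i) and your identification of where proper outerness must enter---as the nonexistence of a nonzero $y_0\in M$ with $my_0=y_0\tilde\alpha(m)$ for all $m\in M$, i.e.\ Kallman's ``freely acting'' characterization---both match the paper. The problem is the route you actually develop in detail. A Ryll--Nardzewski fixed point for the group $\{T_u:u\in\U(M)\}$ acting on the $w^*$-compact convex set $\Omega$ is not available: the theorem requires \emph{weak} compactness (the Namioka--Asplund proof needs the set to be dentable/norm-fragmented), and the statement ``a norm-distal semigroup of $w^*$-continuous affine self-maps of a $w^*$-compact convex set has a common fixed point'' is false in general. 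A counterexample is the translation action of a non-amenable discrete group $G$ on the $w^*$-compact convex set of means on $\ell^\infty(G)$: each translation is a $w^*$-continuous affine surjective isometry, so the action is norm-distal, yet a common fixed point would be an invariant mean. So there is no ``correct form of the Ryll--Nardzewski theorem compatible with $w^*$-compactness'' at this level of generality; some genuinely von Neumann algebraic input beyond distality is unavoidable, and this is a real gap in your primary argument rather than a technicality.

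Your fallback---quoting an averaging theorem for properly outer automorphisms---is exactly what the paper does, and is the step you would need to make precise. The paper fixes $x_0\in A$, forms the completely bounded map $\phi(x)=\tilde\alpha^{-1}(x)\tilde\alpha^{-1}(x_0)$, and invokes \cite[Lemma 4.2]{CaSm} (resting on \cite[Theorem 3.3]{ChSi}) to obtain a net of families $(m_j)_{j\in J}$ in $M$ with $\sum_j m_j^*m_j=1$ and $\sum_j\phi(xm_j^*)m_j\to 0$ in the $w^*$-topology; the limit is the $M$-bimodule part of $\phi$, which vanishes precisely because it would otherwise yield a nonzero intertwiner for the properly outer automorphism $\tilde\alpha^{-1}$---the same fact you isolated. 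Applying $\tilde\alpha$ and setting $x=1$ gives $\sum_j m_j^*x_0\tilde\alpha(m_j)\to 0$. One further point you omit: the families produced this way are infinite, so the paper reuses the Kaplansky density argument from Lemma \ref{WtoN} to pass to finite sums with $\sum_i a_i^*a_i=1$ before concluding $0\in\overline{W_{A''}(x_0,\tilde\alpha)}^{w^*}$, and hence $0\in\overline{W_A(x_0,\alpha)}^{w^*}$ by Lemma \ref{WtoN}(i). In short: your skeleton and your use of proper outerness are right, but the proof is only complete if you commit to the Christensen--Sinclair averaging result (or an equivalent), not to a fixed-point theorem.
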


\begin{proof}
Fix $x_0\in A$ and consider the completely bounded map $\phi:M\to M$ defined by
\begin{equation}\label{propouter.1}
\phi(x)=\tilde\alpha^{-1}(x)\tilde\alpha^{-1}(x_0),\ \ \ x\in M.
\end{equation}
Since $\tilde\alpha^{-1}$ is properly outer, \cite[Lemma 4.2]{CaSm} and the discussion of \cite[Theorem 3.3]{ChSi} that precedes it in \cite[Section 4]{CaSm} give a net indexed by sets of
operators $\beta =(m_j)_{j\in J}$ from $M$ satisfying $\sum_{j\in J}m_j^*m_j=1$ so that
\begin{equation}\label{propouter.2}
w^*{\text{-}}\lim_\beta \sum_{j\in J}\phi(xm_j^*)m_j=0,\ \ \ x\in M.
\end{equation}
Applying the $w^*$-continuous map $\tilde\alpha$ to \eqref{propouter.2} leads to
\begin{equation}\label{propouter.3}
w^*{\text{-}}\lim_\beta \sum_{j\in J} xm_j^*x_0\tilde\alpha(m_j)=0,
\ \ \ x\in M,
\end{equation}
so we may take $x=1$ in \eqref{propouter.3} to obtain
\begin{equation}\label{propouter.4}
w^*{\text{-}}\lim_\beta \sum_{j\in J} m_j^*x_0\tilde\alpha(m_j)=0.
\end{equation}
The Kaplansky density argument of Lemma \ref{WtoN} allows us to assume that the sums in \eqref{propouter.4} have only finitely many terms, and so $0\in \ovl{W_{A''}(x_0,\tilde\alpha)}^{w^*}$. Then $0\in
\ovl{W_A(x_0,\alpha)}^{w^*}$ by Lemma \ref{WtoN} (i).
\end{proof}

\begin{lem}\label{no2-1}
Let $A$ be a unital simple C$^*$-algebra represented nondegenerately on a Hilbert space $\Hil$ and let $M=A''$. Let $\alpha$ be an outer $*$-automorphism of $A$ that is implemented by a
unitary $u\in M$. If $M$ has no II$_1$ part, then $0\in \overline{W_A(x,\alpha)}^{w^*}$ for each $x\in A$.
\end{lem}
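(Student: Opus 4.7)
The plan is to first reduce the problem via Lemma \ref{WtoN}(i) to showing $0\in\overline{W_M(x,\tilde\alpha)}^{w^*}$, where $\tilde\alpha=\Ad u$ is the extension of $\alpha$ to $M$. Since $\tilde\alpha(m_i)=um_iu^*$, each element of $W_M(x,\tilde\alpha)$ takes the form
$$\sum_i m_i^* x\tilde\alpha(m_i) = \sum_i m_i^* (xu) m_i u^* = \Bigl(\sum_i m_i^*(xu)m_i\Bigr)u^*.$$
Because right multiplication by $u^*$ is a $w^*$-homeomorphism of $M$, the task reduces to showing that $0$ lies in the $w^*$-closure of the ``Dixmier-type'' convex set
$$W_M'(xu):=\Bigl\{\sum_i m_i^*(xu)m_i:m_i\in M,\ \sum_i m_i^*m_i=1\Bigr\}.$$

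Next I would use Corollary \ref{Zmap2} to produce a $\Z(M)$-bimodular UCP map with the right vanishing property. Taking the trivial ideal $J=\{0\}$, which trivially satisfies $J\cap\Z(M)=\{0\}$, outerness of $\alpha$ on $A$ combined with the fact that $u$ implements $\alpha$ gives, via Corollary \ref{Zmap2}, a UCP $\Z(M)$-bimodular map $\Phi:M\to\Z(M)$ with $\Phi|_{Au}=0$. In particular $\Phi(xu)=0$.

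The final step would invoke the Haagerup-Zsid\'o theorem \cite{HZ}, which is precisely designed for von Neumann algebras with no type II$_1$ summand. In the form we need, it asserts that for any $\Z(M)$-bimodular UCP map $\Phi:M\to\Z(M)$ and any $y\in M$, the element $\Phi(y)$ lies in the norm closure of $W_M'(y)$. Applied to $y=xu$, this yields
$$0=\Phi(xu)\in\overline{W_M'(xu)}^{\|\cdot\|}\subseteq\overline{W_M'(xu)}^{w^*}.$$
Combined with the reduction above and Lemma \ref{WtoN}(i), this delivers $0\in\overline{W_A(x,\alpha)}^{w^*}$.

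The main obstacle I anticipate is matching the precise formulation of \cite{HZ} to the $\Phi$ produced by Corollary \ref{Zmap2}, which is constructed by Hahn--Banach extension and need not be normal. If normality of $\Phi$ is required, a workaround is to decompose $M$ by its central type-summands under the hypothesis ``no II$_1$ part'': Dixmier's averaging theorem handles the finite type I summand (where the center-valued trace is the unique $\Z(M)$-bimodular UCP map, and so must coincide with $\Phi$ there, yielding value zero on $xu$), while the Haagerup--Zsid\'o result directly controls the properly infinite summands (I$_\infty$, II$_\infty$, III). Piecing these together via the central decomposition gives the desired approximation of $0$ by elements of $W_M'(xu)$.
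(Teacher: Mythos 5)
Your overall strategy --- reduce to $M$ via Lemma \ref{WtoN}(i), absorb $u^*$ on the right so that the problem becomes averaging $xu$, kill $xu$ with a $\Z(M)$-bimodular UCP map from Corollary \ref{Zmap2}, and then invoke Haagerup--Zsid\'{o} --- is exactly the paper's strategy, but the step where you quote \cite{HZ} is misstated in a way that invalidates the argument. Th\'{e}or\`{e}me 3 of \cite{HZ} does \emph{not} assert that $\Phi(y)$ lies in the norm closure of $W_M'(y)$ for an arbitrary unital positive $\Z(M)$-bimodular map $\Phi$; it identifies the unitary Dixmier set $C_M(y)$ with $V_{M,J}(y)=\{\Phi(y):\Phi\in S_{\Z(M)}(M),\ \Phi|_J=0\}$, where $J$ is the norm closed ideal generated by the \emph{finite projections} of $M$ (and even this clean form is available only after one knows $M$ has no finite summand). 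The version you state is false: take $M=\B(\Hil)$ with $\Hil$ infinite dimensional, $y$ a rank-one projection, and $\Phi$ a vector state supported on the range of $y$. Then $\Phi(y)=1$, while every element of $W_M'(y)$ is a finite-rank positive contraction, so $1$ is at norm distance at least $1$ from $W_M'(y)$. Consequently you cannot get away with applying Corollary \ref{Zmap2} to $J=\{0\}$: you must apply it to the ideal $J$ generated by the finite projections, and for that you must first prove $J\cap\Z(M)=\{0\}$. That verification is precisely where the hypotheses enter: a nonzero finite central projection $p$ would make $Mp$ finite, hence (by the no II$_1$ assumption) type I and finite; simplicity of $A$ makes $a\mapsto ap$ faithful, forcing $A$ to be a matrix algebra, on which every automorphism is inner, contradicting outerness. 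This argument also shows $M$ has no finite type I part, which is what reduces Th\'{e}or\`{e}me 3 to the clean identity $C_M(y)=V_{M,J}(y)$. As written, your main line never uses ``no II$_1$ part'' or the simplicity of $A$, which is a symptom of the missing step.

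Your anticipated ``obstacle'' is also misdiagnosed: \cite{HZ} does not require normality of $\Phi$ (singular maps are essential to that theorem); the property you need is vanishing on $J$. The proposed fallback for a finite type I summand would not work either: on a type I$_n$ summand the center-valued trace is \emph{not} the unique unital positive $\Z$-bimodular map into the center (for instance $z\otimes e_{ij}\mapsto \delta_{i1}\delta_{j1}z$ on $\Z\,\ovl{\otimes}\,\mathbb{M}_n$ is another), so Dixmier averaging there produces $\mathbb{T}_M(xu)$, about which you know nothing, rather than $\Phi(xu)=0$. The correct move is the one above: show that under the standing hypotheses the finite type I summand is absent, so no such case arises. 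With that repair --- $J$ the ideal generated by the finite projections, $J\cap\Z(M)=\{0\}$ established, Corollary \ref{Zmap2} applied to this $J$, and the resulting $0\in C_M(xu)\subseteq\overline{W_M'(xu)}^{\|\cdot\|}$ --- your reduction and the final conversion back to $W_A(x,\alpha)$ go through and coincide with the paper's proof.
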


\begin{proof}
Let $J\subseteq M$ be the norm closed two sided ideal generated by the finite projections of $M$. If $J\cap \Z(M)\ne \{0\}$, then $\Z(M)$ contains a nonzero finite projection $p$ and $Mp$ is a finite von Neumann algebra. By hypothesis, $M$ has no II$_1$ part and so $Mp$ is type I and finite. Simplicity of $A$ shows that the representation $a \mapsto ap$ of $A$ is faithful and so $A$ must be finite dimensional, and thus a matrix factor. This is impossible because all automorphisms would then be inner, so we conclude that $J\cap \Z(M)=\{0\}$. In addition, this argument shows that $M$ also has no type I$_{\mathrm{finite}}$ part.

Following the notation of \cite{HZ}, $S_{\Z(M)}(M)$ is the set of unital positive $\Z(M)$-bimodule maps $\Phi:M\to \Z(M)$, and the
  {\it{central image}} of $x\in M$ modulo $J$ is 
\begin{equation}\label{no2-1.1}
V_{M,J}(x):=\{\Phi(x):\Phi\in S_{\Z(M)}(M),\ \Phi|_J=0\}.
\end{equation}
Since we have already established that $M$ has no finite part, \cite[Th\'{e}or\`{e}me 3]{HZ} reduces to 
\begin{equation}\label{no2-1.2}
C_M(x)=V_{M,J}(x)
\end{equation}
where, for $x\in M$, 
\begin{equation}\label{no2-1.3}
C_M(x):=\ovl{{\mathrm{conv}}}^{\|\cdot\|}\{v^*xv:v\in M,\ {\mathrm{unitary}}\}\cap \Z(M).
\end{equation}

If $x\in A$, then Corollary \ref{Zmap2} gives a map $\Phi\in S_{\Z(M)}(M)$ so that $\Phi|_J=0$ and $\Phi(xu)=0$. Thus 
$0\in V_{M,J}(xu)$ so $0\in C_M(xu)$ from \eqref{no2-1.2}. Given $\delta >0$, we see from \eqref{no2-1.3} that there are positive constants $\lambda_1,\ldots ,\lambda_n \in \mathbb{R}$ summing to 1 and unitaries $v_1,\ldots,v_n\in M$ so that
\begin{equation}\label{no2-1.4}
\left\|\sum_{i=1}^n \lambda_i v_i^* xu v_i\right\| <\delta.
\end{equation}
Then, from \eqref{no2-1.4},
\begin{equation}\label{no2-1.5}
\left\| \sum_{i=1}^n (\sqrt{\lambda_i}v_i)^*x\tilde{\alpha}(\sqrt{\lambda_i}v_i)\right\|=
\left\|\sum_{i=1}^n \lambda_i v_i^* xu v_iu^*\right\| <\delta,
\end{equation}
where $\tilde{\alpha}$ denotes the extension of $\Ad u$ from $A$ to $M$. Since $\delta >0$ was arbitrary, we see that $0\in \ovl{W_M(x,\tilde\alpha)}^{\|\cdot\|}\subseteq \ovl{W_M(x,\tilde\alpha)}^{w^*}$, so $0\in \ovl{W_A(x,\alpha)}^{w^*}$ by Lemma \ref{WtoN} (i).
\end{proof}

We recall that if $A$ is a Banach algebra with unit 1, then the {\em{state space}} is $\{\phi\in A^*:\|\phi\|=\phi(1)=1\}$. The {\em{numerical range}} of $a\in A$ is the set of values of all states applied to $a$ and is a compact convex subset of $\mathbb{C}$.

\begin{lem}\label{2-1factor}
Let $M$ be a II$_1$ factor with normalized trace $\tau$ and let $x\in M$ be an element whose numerical range contains 0. Given $\delta >0$, there exist elements $m_1,\ldots ,m_k\in M$ satisfying $\sum_{i=1}^k
m_i^*m_i=1$ and
\begin{equation}\label{2-1factor.1}
\left\|\sum_{i=1}^k m_i^*xm_i\right\| < \delta.
\end{equation}
\end{lem}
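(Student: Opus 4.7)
The plan is to combine the Dixmier averaging property of a II$_1$ factor with the weak$^*$-density of normal states in the full state space. Since $0$ lies in the numerical range of $x$, there exists a state $\phi$ on $M$ with $\phi(x)=0$; by the density just mentioned, for any prescribed $\vp>0$ one can find a normal state $\omega$ on $M$ with $|\omega(x)|<\vp$. Because $M$ is finite and $\tau$ is a faithful normal tracial state, $\omega$ has the form $\omega(\,\cdot\,)=\tau(h\,\cdot\,)$ for some positive $h\in M$ with $\tau(h)=1$. Setting $m:=h^{1/2}$, one has $m^*=m$, $m^*m=h$, $\tau(m^*m)=1$, and $\tau(m^*xm)=\tau(xh)=\omega(x)$.

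Next I would invoke the Dixmier property of the II$_1$ factor $M$ applied \emph{simultaneously} to the two elements $m^*xm$ and $h=m^*m$. Iterating the single-element Dixmier theorem yields unitaries $u_1,\dots,u_k\in M$ and non-negative weights $\lambda_1,\dots,\lambda_k$ summing to $1$ with
\[
\Bigl\|\sum_{i=1}^k\lambda_iu_i(m^*xm)u_i^*-\omega(x)\cdot 1\Bigr\|<\vp
\quad\text{and}\quad
\Bigl\|\sum_{i=1}^k\lambda_iu_ihu_i^*-1\Bigr\|<\vp.
\]
Defining $n_i:=\sqrt{\lambda_i}\,mu_i^*$ and using $m^*=m$, a direct computation gives $\sum_in_i^*xn_i=\sum_i\lambda_iu_i(m^*xm)u_i^*$ and $\sum_in_i^*n_i=\sum_i\lambda_iu_ihu_i^*$, so both sums lie within $\vp$ in norm of $\omega(x)\cdot 1$ and $1$ respectively.

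The final step is to rescale so that the constraint $\sum n_i^*n_i=1$ holds exactly. Provided $\vp<1/2$, the positive operator $T:=\sum_i n_i^*n_i$ is invertible with $\|T^{-1}\|\le(1-\vp)^{-1}$; setting $\tilde n_i:=n_iT^{-1/2}$ yields $\sum_i\tilde n_i^*\tilde n_i=1$ together with
\[
\Bigl\|\sum_i\tilde n_i^*x\tilde n_i\Bigr\|=\Bigl\|T^{-1/2}\Bigl(\sum_in_i^*xn_i\Bigr)T^{-1/2}\Bigr\|\le\|T^{-1}\|\bigl(|\omega(x)|+\vp\bigr)\le\frac{2\vp}{1-\vp},
\]
which is below $\delta$ once $\vp$ is chosen small enough. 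I expect the trickiest step to be the simultaneous Dixmier averaging: one handles it by applying the Dixmier property first to $m^*xm$ to obtain a convex combination of unitary conjugates close to $\omega(x)\cdot 1$, then applying it a second time to the resulting image of $h$; since the second averaging is itself a convex combination of unitary conjugates, it does not enlarge the norm of the residual already achieved for $m^*xm$, so the two bounds can be realized by a single set of unitaries and weights.
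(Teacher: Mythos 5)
Your proof is correct, but it takes a genuinely different route from the paper's in the key middle step. Both arguments begin identically (a state vanishing at $x$, then a normal state $\omega$ with $|\omega(x)|<\vp$ by $w^*$-density) and both end with Dixmier averaging, but in between the paper reduces to separable predual, truncates the density $b$ of $\omega$ into $M^+$, places it inside a masa identified with $L^\infty[0,1]$, approximates by a matrix subfactor $\mathbb{M}_n$, and uses Choi's theorem to realize $y\mapsto\tau(by)I_n$ as $\sum_i c_i^*yc_i$ with $\sum_i c_i^*c_i=I_n$; the elements $m_i$ are then $\sqrt{\lambda_j}\,c_iu_j$ and satisfy $\sum m_i^*m_i=1$ exactly, so no rescaling is needed. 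You instead take the single element $m=h^{1/2}$, perform a \emph{simultaneous} Dixmier averaging of $m^*xm$ and $m^*m$ (your iteration argument for this is sound: the second averaging is a unital positive map fixing scalars, hence does not degrade the bound already achieved for $m^*xm$), and then restore the exact constraint $\sum\tilde n_i^*\tilde n_i=1$ by conjugating with $T^{-1/2}$. This buys you a shorter argument that avoids the separability reduction, the masa machinery, and Choi's theorem, at the cost of the final perturbation estimate. One small imprecision to repair: a normal state on a II$_1$ factor has density $h\in L^1(M)^+$, not necessarily $h\in M$; as in the paper, you should replace $h$ by $(h\wedge r1)/\tau(h\wedge r1)$ for large $r$ (or note that normal states with bounded densities are already $w^*$-dense), which changes $\omega(x)$ by an arbitrarily small amount and does not affect the rest of your argument.
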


\begin{proof}
Fix $\delta >0$, and let $x\in M$ be an element whose numerical range contains 0. As in the proof of \cite[Proposition 3.1]{Cam}, we may assume that $M$ has a separable predual. There exists a state $\psi\in M^*$ so that $\psi(x)=0$, and a Hahn-Banach separation argument shows that the normal states on $M$ are $w^*$-dense in the state space. Consequently there exists a normal state $\phi$ so that $|\phi(x)|<\delta$. Then there exists $b\in L^1(M)^+$ so that $\tau(b)=1$ and $\phi(\cdot)=\tau(b\,\cdot)$. If we replace $b$ by $(b\wedge r1)/\tau(b\wedge r1)$ for a sufficiently large value of $r$, then we may assume that $b\in M^+$. Now choose a masa in $M$ containing $b$, and identify this masa with $L^\infty([0,1],dt)$ by \cite[Theorem 3.5.2]{SS2} which requires separability of $M_*$. A further approximation allows us to choose from the masa $n$ orthogonal  projections summing to 1 that are equivalent in $M$ and to assume that $b$ is in their span. Let $\mathbb{M}_n$ be a matrix subfactor so that these projections are the minimal diagonal projections.

Let $E:M\to \mathbb{M}_n$ be the trace preserving conditional expectation. Then
\begin{equation}\label{2-1factor.2}
\tau(bE(x))=\tau(E(bx))=\tau(bx),
\end{equation}
so $|\tau(bE(x))|<\delta$. On $\mathbb{M}_n$, the map $y\mapsto \tau(by)I_n$ is a unital completely positive map, and so there are matrices $c_1,\ldots ,c_s\in \mathbb{M}_n$ so that 
\begin{equation}\label{2-1factor.3}
\tau(by)I_n=\sum_{i=1}^sc_i^*yc_i,\ \ \ y\in \mathbb{M}_n,
\end{equation}
and $\sum_{i=1}^sc_i^*c_i=I_n$, \cite[Theorem 1]{Choi}. Then
\begin{align}
\left|\tau\left(\sum_{i=1}^sc_i^*xc_i\right)\right|&=\left|\tau E
\left(\sum_{i=1}^sc_i^*xc_i\right)\right| 
= \left|\tau\left(\sum_{i=1}^sc_i^*E(x)c_i\right)\right| \notag\\
&=|\tau(bE(x))|<\delta.\label{2-1factor.4}
\end{align}
By the Dixmier approximation theorem \cite[Theorem 8.3.6]{KR2}, there are positive constants $\{\lambda_j\}_{j=1}^m$ summing to 1 and unitaries $u_j\in M$ so that 
\begin{equation}\label{2-1factor.5}
\left\|\sum_{j=1}^m \lambda_j u_j^*\left(\sum_{i=1}^sc_i^*xc_i\right)u_j\right\| < \delta,
\end{equation}
 and the result follows by listing the set of elements $\{\sqrt{\lambda_j}c_iu_j:1\leq j\leq m,\,1\leq i\leq s\}$ as $m_1,\ldots ,m_k\in M$.
\end{proof}

Our next objective is to establish an analogous result for general II$_1$ von Neumann algebras in Lemma \ref{lem2.12}. This requires several preliminary lemmas.

Let $A$ be a unital C$^*$-algebra. The unitary group of $\mathbb{M}_n(A)$ is denoted $\mathcal{U}_n(A)$ while $\mathcal{U}_n^{{\mathrm{her}},\pi}(A)$ denotes the set of unitaries in
$\mathcal{U}_n(A)$ of the form $e^{iH}$where $H$ is hermitian in $\mathbb{M}_n(A)$ and $\|H\|\leq \pi$. It is convenient to work with
$\mathcal{U}_n^{{\mathrm{her}},\pi}(A)$ rather than  $\mathcal{U}_n(A)$ because unitaries of the form $e^{iH}$ in quotient algebras $A/J$ can be lifted to $A$ by lifting the associated hemitian elements whereas general unitaries do not always lift.

The space of column vectors $(a_1,\ldots ,a_n)^t$ over $A$ satisfying
$\sum_{i=1}^n a_i^*a_i=1$ is denoted ${\mathrm{Col}}_{n,1}(A)$.

\begin{lem}\label{lem2.8}
Let $B\subseteq \B(\Hil)$ be a separable unital C$^*$-algebra whose $w^*$-closure $M$ is a II$_1$ von Neumann algebra. Let $n \geq 1$, and let $\mathcal{S}$ be a countable norm dense subset
of $\mathcal{U}_n^{{\mathrm{her}},\pi}(B)$. Let $\mathcal{C}\subseteq {\mathrm{Col}}_{n,1}(B)$ be the countable set of first columns of matrices in $\mathcal{S}$. Then $\mathcal{C}$ is strongly dense
in ${\mathrm{Col}}_{n,1}(M)$.
\end{lem}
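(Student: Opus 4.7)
The goal is to show that any column $V = (m_1,\ldots,m_n)^t \in \mathrm{Col}_{n,1}(M)$ can be strongly approximated by first columns of matrices drawn from $\mathcal{S}$. My overall strategy is: first complete $V$ to a unitary $U \in \mathbb{M}_n(M)$; then write $U = e^{iH}$ with $\|H\|\le \pi$ via a branch of $\log$; then approximate $H$ strongly by hermitian elements of $\mathbb{M}_n(B)$ of norm at most $\pi$ using Kaplansky density; and finally pass to $\mathcal{S}$ using its norm density in $\mathcal{U}_n^{\mathrm{her},\pi}(B)$.

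For the completion step, place $V$ as the first column of an otherwise zero matrix $V' \in \mathbb{M}_n(M)$. The hypothesis $\sum_{i=1}^n m_i^*m_i = 1$ gives $V'^*V' = E_{11}$ (the first diagonal matrix unit), while $V'V'^* =: P$ is a projection in $\mathbb{M}_n(M)$. Since $M$ is II$_1$ and hence finite, so is $\mathbb{M}_n(M)$; in a finite von Neumann algebra, Murray--von Neumann equivalent projections have equivalent complements, so $1 - E_{11} \sim 1 - P$ via some partial isometry $W$ with $W^*W = 1 - E_{11}$ and $WW^* = 1 - P$. Then $U := V' + W$ is a unitary in $\mathbb{M}_n(M)$ whose first column is $V$ (the relation $W = W(1-E_{11})$ ensures the first column of $W$ vanishes, and the initial/final projections of $V'$ and $W$ being orthogonal make the cross terms in $U^*U$ and $UU^*$ vanish).

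Next, apply the Borel functional calculus to $U$ using the branch of $-i\log$ taking values in $[-\pi, \pi]$ to obtain a hermitian $H \in \mathbb{M}_n(M)$ with $e^{iH} = U$ and $\|H\|\le \pi$. Kaplansky's density theorem applied to the inclusion $\mathbb{M}_n(B) \subseteq \mathbb{M}_n(M)$ provides hermitian $H_k \in \mathbb{M}_n(B)$ with $\|H_k\|\le \pi$ converging strongly to $H$; a sequence suffices since $B$ is separable. Strong continuity of continuous functional calculus on norm-bounded sets of self-adjoint operators then gives $e^{iH_k} \to e^{iH} = U$ strongly, with each $e^{iH_k} \in \mathcal{U}_n^{\mathrm{her},\pi}(B)$. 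Finally, by norm density of $\mathcal{S}$, pick $U_k \in \mathcal{S}$ with $\|U_k - e^{iH_k}\|\le 1/k$; the first columns of the $U_k$ lie in $\mathcal{C}$ and still converge strongly to $V$.

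I expect the main obstacle to be the completion step, since this is where the II$_1$ hypothesis is genuinely used: the equivalence of complements of Murray--von Neumann equivalent projections in a finite algebra is precisely what lets us enlarge the given isometry to a unitary in $\mathbb{M}_n(M)$. Everything after that is a routine combination of the logarithmic functional calculus, strong continuity of continuous functional calculus on bounded self-adjoint sets, and Kaplansky density.
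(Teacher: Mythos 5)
Your argument is essentially identical to the paper's proof: complete the column to a unitary using equivalence of complements in the finite algebra $\mathbb{M}_n(M)$, take a hermitian logarithm of norm at most $\pi$, approximate it via Kaplansky density, use strong continuity of the functional calculus on bounded sets, and finish with norm density of $\mathcal{S}$. The only quibble is your claim that separability of $B$ lets you use sequences rather than nets --- since $\Hil$ need not be separable the strong topology need not be metrizable on bounded sets --- but this is immaterial because the desired conclusion is strong density, which the net version (as in the paper) delivers.
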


\begin{proof}
Let $C\in {\mathrm{Col}}_{n,1}(M)$, and regard $C$ as the first column of a matrix in $\mathbb{M}_n(M)$ whose other columns are all 0. Then $C^*C$ is the projection
$P={\mathrm{diag}}(1,0,\ldots,0)\in \mathbb{M}_n(M)$, so $CC^*$ is a projection $Q\in \mathbb{M}_n(M)$ which is equivalent to $P$. Since $\mathbb{M}_m(M)$ is finite, $I-P$ and $I-Q$ are
equivalent by a partial isometry $E$ such that $E^*E=I-P$ and $EE^*=I-Q$. Then $C+E$ is a unitary $U\in \mathbb{M}_n(M)$ whose first column is $C$. By the functional calculus, we may
choose a self-adjoint element $H\in \mathbb{M}_n(M)$ with $\|H\|\leq \pi$ so that $U=e^{iH}$. By the Kaplansky density theorem, there is a net $\{H_\lambda\}_{\lambda\in \Lambda}$ of
self-adjoint elements in $\mathbb{M}_n(B)$ such that $\lim_{\lambda}H_\lambda=H$ strongly and $\|H_\lambda\|\leq \pi$. The functional calculus is strongly continuous on uniformly bounded
convergent nets, so $\lim_\lambda e^{iH_\lambda}=e^{iH}=U$ strongly. By norm density of $\mathcal{S}$ in $\mathcal{U}_n^{{\mathrm{her}},\pi}(B)$, the net $\{e^{iH_\lambda}\}_{\lambda\in \Lambda}$ can
be replaced by a net from $\mathcal{S}$. The result now follows by dropping to the first columns of these matrices.
\end{proof}

\begin{lem}\label{lem2.9}
Let $A$ be a separable unital C$^*$-algebra, let $n\geq 1$, let $\mathcal{S}_n$ be a countable norm dense subset of $\mathcal{U}_n^{{\mathrm{her}},\pi}(A)$, and let $\mathcal{C}_n$ be the countable
set of first columns of matrices in $\mathcal{S}_n$. If $\sigma :A\to \B(\Hil_\sigma)$ is a $*$-representation such that $\sigma(A)''$ is type II$_1$, then $\sigma\otimes
I_n(\mathcal{C}_n)$ is strongly dense in ${\mathrm{Col}}_{n,1}(\sigma(A)'')$.
\end{lem}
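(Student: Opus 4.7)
The plan is to reduce the statement directly to Lemma~\ref{lem2.8} applied to the separable unital C$^*$-algebra $B = \sigma(A) \subseteq \B(\Hil_\sigma)$. Since $A$ is separable and $\sigma$ is continuous, $B$ is separable, and by hypothesis $B'' = \sigma(A)''$ is type II$_1$. Thus Lemma~\ref{lem2.8} will deliver exactly what we want as soon as we can produce a countable norm-dense subset of $\mathcal{U}_n^{{\mathrm{her}},\pi}(B)$ whose first columns are $(\sigma\otimes I_n)(\mathcal{C}_n)$.

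The key technical step, which the paragraph preceding Lemma~\ref{lem2.8} motivates by introducing the class $\mathcal{U}_n^{{\mathrm{her}},\pi}$ rather than the full unitary group, is to show that the induced $*$-homomorphism $\sigma\otimes I_n:\mathbb{M}_n(A)\to \mathbb{M}_n(B)$ sends $\mathcal{U}_n^{{\mathrm{her}},\pi}(A)$ \emph{onto} $\mathcal{U}_n^{{\mathrm{her}},\pi}(B)$. Given $U = e^{iH}\in \mathcal{U}_n^{{\mathrm{her}},\pi}(B)$ with $H = H^*$ and $\|H\|\leq \pi$, I would choose any self-adjoint preimage $H_0\in \mathbb{M}_n(A)$ of $H$ and then apply the continuous functional calculus with $f(t) = \max(-\pi,\min(\pi,t))$. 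The element $\tilde H = f(H_0)\in \mathbb{M}_n(A)$ is self-adjoint with $\|\tilde H\|\leq \pi$, and because $*$-homomorphisms commute with continuous functional calculus $\tilde H$ maps to $f(H) = H$. Hence $e^{i\tilde H}\in \mathcal{U}_n^{{\mathrm{her}},\pi}(A)$ lifts $U$. Combining this surjectivity with the contractivity of $\sigma\otimes I_n$ and the norm-density of $\mathcal{S}_n$, it follows that $(\sigma\otimes I_n)(\mathcal{S}_n)$ is a countable norm-dense subset of $\mathcal{U}_n^{{\mathrm{her}},\pi}(B)$.

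Applying Lemma~\ref{lem2.8} to $B$ with this dense subset, the first columns of matrices in $(\sigma\otimes I_n)(\mathcal{S}_n)$ are strongly dense in ${\mathrm{Col}}_{n,1}(\sigma(A)'')$. Since $\sigma\otimes I_n$ acts entry-wise on $\mathbb{M}_n(A)$, the first column of $(\sigma\otimes I_n)(V)$ coincides with $\sigma\otimes I_n$ applied to the first column of $V$, so the collection of these first columns equals $(\sigma\otimes I_n)(\mathcal{C}_n)$, completing the reduction.

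The only delicate point is the functional-calculus lifting in the second paragraph; everything else is a direct invocation of Lemma~\ref{lem2.8}. I do not expect any serious obstacle beyond carefully tracking the passage from $A$ to its quotient $\sigma(A)$, which is precisely what the hermitian-exponential form of $\mathcal{U}_n^{{\mathrm{her}},\pi}$ was designed to accommodate.
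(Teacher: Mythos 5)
Your proposal is correct and follows essentially the same route as the paper: identify $\sigma(A)$ with $A/\ker\sigma$, lift self-adjoint elements of norm at most $\pi$ to self-adjoint elements of norm at most $\pi$ to see that $\sigma\otimes I_n$ maps $\mathcal{U}_n^{{\mathrm{her}},\pi}(A)$ onto $\mathcal{U}_n^{{\mathrm{her}},\pi}(\sigma(A))$, conclude that $(\sigma\otimes I_n)(\mathcal{S}_n)$ is norm dense there, and invoke Lemma \ref{lem2.8}. The paper simply asserts the norm-controlled lifting, whereas you supply the standard functional-calculus argument for it; otherwise the two proofs coincide.
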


\begin{proof}
Define $B$ to be $\sigma(A)$, isomorphic to $A/\ker \sigma$. Each self-adjoint $H\in \mathbb{M}_n(B)$ with $\|H\|\leq \pi$ lifts to a self-adjoint $\tilde{H}\in \mathbb{M}_n(A)$ with
$\|\tilde{H}\|\leq \pi$ and thus $\sigma\otimes I_n$ maps $\mathcal{U}_n^{{\mathrm{her}},\pi}(A)$ onto $\mathcal{U}_n^{{\mathrm{her}},\pi}(B)$. Then $\sigma\otimes I_n(\mathcal{S}_n)$ is norm dense in
$\mathcal{U}_n^{{\mathrm{her}},\pi}(B)$, and the result now follows from Lemma \ref{lem2.8}.
\end{proof}

In the next lemma we will make use of direct integral theory for von Neumann algebras on separable Hilbert spaces, and we refer to \cite[Ch. 14]{KR2} for a discussion of this topic. We
briefly review the results that we will need. The Hilbert space $\Hil$ is decomposed as $\int^\oplus_\Omega \Hil(\omega)\,d\mu(\omega)$ of Hilbert spaces indexed by a probability space
$(\Omega,\mu)$, and the von Neumann algebra $M$ decomposes as a direct integral of factors $\int^\oplus_\Omega M(\omega)\,d\mu(\omega)$. We will only be concerned with a II$_1$ von
Neumann algebra so, after deleting a null set, each $M(\omega)$ is a II$_1$ factor. If $A$ is a separable unital C$^*$-subalgebra of $M$ and the components of $a\in A$ are written
$a(\omega)\in M(\omega)$, then let $A(\omega)\subseteq M(\omega)$ be the C$^*$-algebra generated by $\{a(\omega):a\in A\}$. After deletion of a countable number of null sets, the maps
$a\mapsto a(\omega)$ define surjective $*$-homomorphisms $\sigma_\omega:A\to A(\omega)$ for $\omega\in\Omega$ \cite[Theorem 14.1.13]{KR2}. Moreover, if $A$ is strongly dense in $M$, then each $A(\omega)$ is
strongly dense in $M(\omega)$ after a further deletion of a null set.

For a unital C$^*$-algebra $A$, let ${\mathrm{Col}}_{\infty,1}(A)$ denote the set of finitely nonzero columns over $A$ satisfying $\sum_{i=1}^\infty a_i^*a_i=1$.

\begin{lem}\label{lem2.10}
Let $M$ be a II$_1$ von Neumann algebra acting on a separable Hilbert space $\Hil$. Let $x\in M$ and suppose that there is a unital completely positive $\mathcal{Z}(M)$-bimodule map $\Phi
: M\to \mathcal{Z}(M)$ such that $\Phi(x)=0$. Let $W\subseteq M$ be the convex set
\begin{equation}
W=\{C^*XC: C\in {\mathrm{Col}}_{\infty,1}(M)\},
\end{equation}
where $X={\mathrm{diag}}(x,x,x,\ldots)$. Then $0\in \overline{W}^{w^*}$.
\end{lem}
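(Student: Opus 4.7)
The strategy is to reduce to the II$_1$ factor case (Lemma \ref{2-1factor}) via a direct integral decomposition of $M$ over $Z(M)$, and then to patch the resulting fiberwise columns into a single element of $\mathrm{Col}_{\infty,1}(M)$ by measurable selection from the countable dense family of columns supplied by Lemma \ref{lem2.9}.

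Since $\Hil$ is separable, $Z(M) \cong L^\infty(\Omega, \mu)$ and $M = \int^\oplus_\Omega M(\omega)\,d\mu(\omega)$ with $M(\omega)$ a II$_1$ factor almost everywhere. Because $\Phi$ is a unital $Z(M)$-bimodule map, $\Phi|_{Z(M)} = \mathrm{id}$, and a standard disintegration argument yields a field $\omega \mapsto \psi_\omega$ of states on $M(\omega)$ with $\Phi(m)(\omega) = \psi_\omega(m(\omega))$ a.e.\ for every $m \in M$. From $\Phi(x) = 0$ one concludes $\psi_\omega(x(\omega)) = 0$ a.e., placing $0$ in the numerical range of $x(\omega)$ inside the II$_1$ factor $M(\omega)$, so that Lemma \ref{2-1factor} applies fiberwise. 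Fix a separable strongly dense unital C$^*$-subalgebra $A \subseteq M$ with its fiber representations $\sigma_\omega: A \to A(\omega)$, choose countable norm-dense subsets $\mathcal{C}_n \subseteq \mathrm{Col}_{n,1}(A)$ as in Lemma \ref{lem2.9}, enumerate $\bigcup_n \mathcal{C}_n$ as $\{C^{(k)}\}_{k \ge 1}$, and pad each $C^{(k)}$ with zeros to lie in $\mathrm{Col}_{\infty,1}(A)$.

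By convexity of $W$, it suffices, for any $\epsilon > 0$ and any finite collection $\xi^j, \eta^j \in \Hil$ ($j = 1, \ldots, \ell$), to produce $D \in \mathrm{Col}_{\infty,1}(M)$ with $|\langle D^*XD\,\xi^j, \eta^j\rangle| \le \epsilon(1 + \|\xi^j\|\,\|\eta^j\|)$ for all $j$. For a.e.\ $\omega$, Lemma \ref{2-1factor} provides $m^{(\omega)} \in \mathrm{Col}_{n(\omega),1}(M(\omega))$ with $\|\sum_i (m_i^{(\omega)})^* x(\omega) m_i^{(\omega)}\| < \epsilon/2$, and since Lemma \ref{lem2.9} makes $\sigma_\omega(\mathcal{C}_{n(\omega)})$ strongly dense in $\mathrm{Col}_{n(\omega),1}(M(\omega))$, the product-convergence argument from Lemma \ref{WtoN}(i) yields some $C^{(k)}$ whose $\sigma_\omega$-image is close enough to $m^{(\omega)}$ in the strong operator topology that $|\langle \sigma_\omega(C^{(k)})^* X(\omega) \sigma_\omega(C^{(k)})\xi^j(\omega), \eta^j(\omega)\rangle| < \epsilon(1 + \|\xi^j(\omega)\|\,\|\eta^j(\omega)\|)$ for every $j$. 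Let $\Omega_k$ be the measurable set of $\omega$ on which this inequality holds simultaneously for all $j$ using the specific column $C^{(k)}$; then $\bigcup_k \Omega_k$ covers $\Omega$ modulo a null set. Disjointifying into a partition $\{\Omega_k'\}$ with $p_k = \chi_{\Omega_k'} \in Z(M)$, and setting $D_i = \sum_k p_k c_i^{(k)}$ (strongly convergent via orthogonality of the $p_k$), centrality of the $p_k$ yields $\sum_i D_i^* D_i = \sum_k p_k = 1$, so $D \in \mathrm{Col}_{\infty,1}(M)$; fiberwise integration combined with the Cauchy--Schwarz estimate $\int_\Omega \|\xi^j(\omega)\|\,\|\eta^j(\omega)\|\,d\mu \le \|\xi^j\|\,\|\eta^j\|$ then yields the claimed matrix-element bound and completes the argument since $\epsilon$ is arbitrary. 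The main obstacle is that the strong-operator approximation provided by Lemma \ref{lem2.9} does not preserve the norm bound from Lemma \ref{2-1factor}, so the fiberwise sums $\sigma_\omega(C^{(k)})^* X(\omega) \sigma_\omega(C^{(k)})$ need not be small in norm; this forces one to track specific matrix elements throughout and absorb the fiberwise losses via Cauchy--Schwarz on integration, giving only weak-*, rather than norm, approximation of $0$ in $W$.
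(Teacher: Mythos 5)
Your overall route is the paper's: disintegrate $M$ over $\mathcal{Z}(M)$ into II$_1$ factors, disintegrate $\Phi$ into fiberwise states to place $0$ in the numerical range of $x(\omega)$, invoke Lemma \ref{2-1factor} fiberwise, pass to the countable family of columns from Lemmas \ref{lem2.8}--\ref{lem2.9} to make a measurable selection, and patch with central projections. Your two departures are cosmetic or mildly advantageous: you argue directly on matrix coefficients rather than by contradiction against a single separating normal functional, and you replace the paper's truncation of the vectors (cutting down to $\Omega_k$ to get a uniform fiberwise bound $K$) by the relative bound $\eps(1+\|\xi^j(\omega)\|\,\|\eta^j(\omega)\|)$ integrated via Cauchy--Schwarz. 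That part is fine.

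There is, however, one step that fails as written. The paper defines ${\mathrm{Col}}_{\infty,1}(M)$ to consist of \emph{finitely nonzero} columns, and $W$ is built only from those. Your patched column $D_i=\sum_k p_k c_i^{(k)}$ runs over the \emph{entire} countable partition $\{\Omega_k'\}$, and since the lengths $n(\omega)$ produced by Lemma \ref{2-1factor} are not uniformly bounded over the fibers, infinitely many entries $D_i$ are nonzero. Hence $D\notin{\mathrm{Col}}_{\infty,1}(M)$ and $D^*XD\notin W$, so you have not exhibited an element of $W$ close to $0$; and the finitely-nonzero restriction is not decorative, since the downstream use in Lemma \ref{lem2.12} (and ultimately Proposition \ref{average}) needs finite columns. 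The repair is exactly what the paper does: truncate the partition at a finite stage $J$ so that the leftover set $F$ has small measure, set $z=\sum_{k\le J}p_k$, and insert $1-z$ into a vacant slot to restore $\tilde{C}^*\tilde{C}=1$; the extra term $(1-z)x(1-z)$ contributes at most $\|x\|\int_F\|\xi^j(\omega)\|\,\|\eta^j(\omega)\|\,d\mu(\omega)$, which is small by absolute continuity of the integral. With that modification your argument goes through.
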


\begin{proof}
We argue by contradiction, so suppose that $0\notin \overline{W}^{w^*}$. Since $W$ is a uniformly bounded convex set, the weak operator and $w^*$-topologies coincide on $W$ so, by
Hahn-Banach separation, there exist vectors $\xi_1,\ldots,\xi_r,\eta_1,\ldots,\eta_r\in \Hil$ so that the linear functional $\psi\in M_*$ defined by
$\psi(\cdot)=\sum_{i=1}^r\langle\cdot\xi_i,\eta_i\rangle$ satisfies
\begin{equation}\label{eq10.2}
{\mathrm{Re}}\,\psi(w)\geq 1,\ \ \ w\in W.
\end{equation}
There is a probability space $(\Omega,\mu)$ so that $M$ has a direct integral factor decomposition
\[ M=\int^\oplus_\Omega M(\omega)\,d\mu(\omega)\ \ \ {\mathrm{on}}\ \ \ \Hil =
\int^\oplus_\Omega \Hil(\omega)\,d\mu(\omega).\]
If $\Omega_k=\{\omega: \|\xi_i(\omega)\|,\|\eta_i(\omega)\|\leq k,\ 1\leq i\leq r\}$, then $\lim_{k\to\infty}\mu(\Omega_k)=1$, so a suitably large choice of $k$ allows us to multiply
each $\xi_i,\eta_i$ by $\chi_{\Omega_k}$ and further assume that there is a constant $K$ so that $\|\xi_i(\omega)\|,\|\eta_i(\omega)\|\leq K$ for $\omega\in \Omega$ and $1\leq i\leq r$.

Now choose a unital separable C$^*$-algebra $A\subseteq M$ which is strongly dense in $M$ and also contains the element $x$. Then $A(\omega)$ is stongly dense in $M(\omega)$ for almost
all $\omega$, so we may discard a null set and assume that this is always the case.

Following the proof of \cite[Theorem 4.4]{ChatS}, there are states $\phi(\omega)$ on $M(\omega)$ so that
\begin{equation}\label{10.3}
\Phi(x)(\omega)=\phi(\omega)(x(\omega))
\end{equation}
for almost all $\omega\in\Omega$, and we may assume that this is always true by discarding a null set. Then 0 is in the numerical range of $x(\omega)$ for each $\omega\in \Omega$ since
$\Phi(x)=0$. Now define $\delta=(3rK^2)^{-1}$ and let $\omega_0\in \Omega$ be fixed but arbitrary. By Lemma \ref{2-1factor}, there exists $C(\omega_0)\in {\mathrm{Col}}_{n,1}(M(\omega_0))$ so
that
\begin{equation}\label{10.4}
\|C(\omega_0)^*X(\omega_0)C(\omega_0)\|<\delta.
\end{equation}
In particular,
\begin{equation}\label{10.5}
\left|\sum_{i=1}^r\langle C(\omega_0)^*X(\omega_0)C(\omega_0)\xi_i(\omega_0),\eta_i(\omega_0)\rangle
\right|<\delta rK^2.
\end{equation}
Now let $\mathcal{C}_n$ be the countable set of columns of length $n$ over $A$ constructed in Lemma \ref{lem2.8} and list the elements of this set as $\{C_{n,j}: j\geq 1\}$. After deleting a
null set, we may assume that there are surjective $*$-homomorphisms $\sigma_\omega : A\to A(\omega)$ defined by $a\mapsto a(\omega)$. From the conclusion of strong density in Lemma
\ref{lem2.8}, we may replace $C(\omega_0)$ in \eqref{10.5} by $\sigma_{\omega_0}\otimes I_n(C_{n,j}(\omega_0))\in {\mathrm{Col}}_{n,1}(A(\omega_0))$ for a suitable choice of $j$, while
preserving the inequality. Since $\omega_0\in \Omega$ was arbitrary, we conclude that the inequality
\begin{equation}\label{10.6}
\left|\sum_{i=1}^r\langle C_{n,j}(\omega_0)^*X(\omega_0)
C_{n,j}(\omega_0)\xi_i(\omega_0),\eta_i(\omega_0)\rangle \right|<\delta rK^2
\end{equation}
is valid for at least one pair of integers $(n,j)$ depending on $\omega_0$. Define, for $n,j\geq 1$,
\begin{equation}\label{10.7}
E_{n,j}=\left\{\omega\in\Omega: \left|\sum_{i=1}^r \langle
C_{n,j}(\omega)^*X(\omega)C_{n,j}(\omega)\xi_i(\omega),\eta_i(\omega)
\rangle \right|<\delta rK^2\right\}.
\end{equation}
Each is a measurable set and, from \eqref{10.6},
$\Omega=\cup\{E_{n,j}: n,j\geq 1\}$. Now disjointify the $E_{n,j}$'s and write $\Omega$ as a countable disjoint union of measurable sets $F_{n,j}\subseteq E_{n,j}$, $n,j\geq 1$. These correspond
to pairwise orthogonal projections $z_{n,j}\in \mathcal{Z}(M)$. Choose $J$ large enough that
the set \[ F:=\Omega\setminus \cup\{ F_{n,j}: n,j\leq J\}\] has measure at most $\delta$,
and define a finitely nonzero column by $C=\sum_{n,j\leq J}z_{n,j}C_{n,j}$. Then $C^*C=\sum_{n,j\leq J}z_{n,j}$ and denote this sum by $z$, a central projection. If we extend $C$ to
$\tilde{C}$ by placing $1-z$ in a vacant position, then $\tilde{C}^*\tilde{C}=1$. Put $w=\tilde{C}^*X\tilde{C}\in W$.
We see that
\begin{align}
\psi(w)=\sum_{i=1}^r\sum_{n,j\leq J}&\int_{F_{n,j}}\langle C_{n,j}(\omega)^*X(\omega)C_{n,j}(\omega)\xi_i(\omega),\eta_i(\omega)
\rangle \,
d\mu(\omega)\notag\\
+\sum_{i=1}^r
&\int_F \langle (1-z)\xi_i(\omega),\eta_i(\omega)\rangle \, d\mu(\omega)
\end{align}\label{10.8}
so, from \eqref{10.7},
\begin{equation}\label{10.9}
{\mathrm{Re}}\,\psi(w)\leq |\psi(w)|\leq \delta rK^2\sum_{n,j\leq J}\mu(F_{n,j})+\mu(F) rK^2<2\delta rK^2= 2/3.
\end{equation}
This contradicts ${\mathrm{Re}}\,\psi(w)\geq 1$, proving the result.
\end{proof}

\begin{lem}\label{lem2.11}
Let $M\subseteq \B(\Hil)$ be a II$_1$ von Neumann algebra, let $x\in M$, and fix vectors $\xi_1,\ldots,\xi_r,\eta_1,\ldots,\eta_r\in \Hil$. There exist a separable subspace $\K\subseteq \Hil$ and a
von Neumann algebra $N\subseteq M$ with the following properties:
\begin{itemize}
\item[(i)]
$x\in N$,
\item[(ii)]
$\xi_1,\ldots,\xi_r,\eta_1,\ldots,\eta_r\in \K$,
\item[(iii)]
$\mathcal{Z}(N)=\mathcal{Z}(M)\cap N$,
\item[(iv)]
$\K$ is an invariant subspace for $N$.
\end{itemize}
\end{lem}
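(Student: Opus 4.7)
My approach is an inductive ``separable reduction'' construction. I would build increasing countable $*$-subalgebras $A_0 \subseteq A_1 \subseteq \cdots$ of $M$ over $\mathbb{Q}[i]$ and separable subspaces $K_0 \subseteq K_1 \subseteq \cdots$ of $\Hil$, and then set $N := (\bigcup_n A_n)''$ and $\K := \overline{\bigcup_n K_n}$. Initialize with $A_0$ the countable $\mathbb{Q}[i]$-$*$-algebra generated by $x$, and $K_0$ the linear span of $\{\xi_i,\eta_i\}_{i=1}^r$. At each inductive step $n \to n+1$ I would define $K_{n+1} := \overline{K_n + A_n K_n}$, which is separable because $A_n$ is countable and $K_n$ is separable, and enlarge $A_n$ to a countable $*$-algebra $A_{n+1}$ by adjoining, for each $a$ in a norm-dense countable subset of $A_n$: the image $T(a)$ under the center-valued trace $T : M \to \mathcal{Z}(M)$ (which exists since $M$ is II$_1$, hence finite), together with a finite family of unitaries $u_1,\dots,u_m \in \mathcal{U}(M)$ and positive weights $\lambda_1,\dots,\lambda_m$ summing to $1$ supplied by Dixmier's approximation theorem satisfying $\|\sum_j \lambda_j u_j a u_j^* - T(a)\| < 1/n$.

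Conditions (i) and (ii) are immediate from the initialization. For (iv), the construction gives $A_n K_n \subseteq K_{n+1}$, so $A_\infty \cdot \bigcup_n K_n \subseteq \bigcup_n K_n \subseteq \K$, where $A_\infty := \bigcup_n A_n$; a Kaplansky density argument then extends this to $N\K \subseteq \K$. The easy inclusion $N \cap \mathcal{Z}(M) \subseteq \mathcal{Z}(N)$ in (iii) is automatic. For the reverse, by construction $T(A_\infty) \subseteq A_\infty$, and by normality $T(N) \subseteq N$, so that $T|_N : N \to N \cap \mathcal{Z}(M)$ is a normal tracial conditional expectation. Given $z \in \mathcal{Z}(N)$ and a test vector $\xi \in \Hil$, I would pick $z_\nu \in A_\infty$ strongly approximating $z$ (Kaplansky); the Dixmier unitaries $u_j$ attached to $z_\nu$ lie in $A_\infty \subseteq N$, so $\sum_j \lambda_j u_j z u_j^* = z$ by centrality of $z$ in $N$, while the Dixmier estimate gives $\|\sum_j \lambda_j u_j z_\nu u_j^* - T(z_\nu)\| < 1/n$. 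Subtracting these identities and applying to $\xi$, together with normality of $T$ (so $T(z_\nu)\xi \to T(z)\xi$) and the choice of $\nu$ making $z_\nu$ close to $z$ in the strong operator topology on the finite set $\{u_j^* \xi\}$, yields $T(z)\xi = z\xi$; since $\xi$ was arbitrary, $T(z) = z$, so $z \in \mathcal{Z}(M)$.

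The main obstacle is this final limit argument. The Dixmier-averaging unitaries are attached to the approximants $z_\nu \in A_\infty$ rather than to $z$ itself, so one must carefully interleave the strong-operator approximation $z_\nu \to z$ with the finite collection of test vectors $\{u_j^* \xi\}$ that is determined, post hoc, by the Dixmier unitaries chosen in the inductive construction for $z_\nu$.
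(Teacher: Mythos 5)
Your construction of $N$ and $\K$ is essentially the one in the paper: the same inductive adjunction of the center-valued trace and of Dixmier-averaging unitaries, and parts (i), (ii), (iv) together with the inclusion $\mathcal{Z}(M)\cap N\subseteq \mathcal{Z}(N)$ go through exactly as you describe. The gap is in the final step, and it is the one you flag yourself: the strong-operator limit argument for $\mathcal{Z}(N)\subseteq\mathcal{Z}(M)$ is circular as written. The Dixmier unitaries $u_1,\dots,u_m$ are attached to the approximant $z_\nu$, so the test vectors $u_j^*\xi$ on which you need $z_\nu$ to be strongly close to $z$ only become available after $z_\nu$ has been chosen; since you have no norm control on $z-z_\nu$, you cannot then go back and improve $z_\nu$ without changing the $u_j$. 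I do not see how to break this circle by a diagonal argument, and the paper does not attempt to.

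The paper's resolution uses an observation you already made but did not exploit: $N$, being a von Neumann subalgebra of the finite algebra $M$, has its \emph{own} center-valued trace $\mathbb{T}_N$. Apply $\mathbb{T}_N$ (a norm-one map) to the norm inequality $\bigl\|\mathbb{T}_M(a)-\sum_i\lambda_iu_i^*au_i\bigr\|<\vp$, where the $u_i$ lie in $N$ by construction. Since $\mathbb{T}_M(a)\in\mathcal{Z}(M)\cap N\subseteq\mathcal{Z}(N)$ it is fixed by $\mathbb{T}_N$, and $\mathbb{T}_N(u_i^*au_i)=\mathbb{T}_N(a)$ by the trace property, so the inequality collapses to $\|\mathbb{T}_M(a)-\mathbb{T}_N(a)\|<\vp$ with no vectors involved. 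Hence $\mathbb{T}_M=\mathbb{T}_N$ on the generating C$^*$-algebra, therefore on all of $N$ by normality of both maps, and for $z\in\mathcal{Z}(N)$ one gets $z=\mathbb{T}_N(z)=\mathbb{T}_M(z)\in\mathcal{Z}(M)\cap N$. Replacing your limit argument by this purely norm-level computation closes the proof.
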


\begin{proof}
We will define inductively increasing sequences $A_n\subseteq M$ and $\K_n\subseteq \Hil$ of respectively separable unital C$^*$-algebras and separable subspaces with the following
properties:
\begin{itemize}
\item[(a)]
$x\in A_1$,
\item[(b)]
$\xi_1,\ldots,\xi_r,\eta_1,\ldots,\eta_r\in \K_1$,
\item[(c)]
for $a\in A_n$ and $\zeta\in \K_n$, $a\zeta\in \K_{n+1}$,
\item[(d)]
if $a\in A_n$, then $\mathbb{T}_M(a)\in A_{n+1}$, where $\mathbb{T}_M$ is the center-valued trace on $M$,
\item[(e)]
$A_{n+1}$ contains a countable set $\mathcal{U}_{n+1}\subseteq M$ of unitaries such that
\[\mathbb{T}_M(a)\in \overline{{\mathrm{conv}}}^{\|\cdot\|}
\{u^*au: u\in \mathcal{U}_{n+1}\},\ \ \ a\in A_n.\]
\end{itemize}

We define $A_1=C^*(x,1)$ and $\K_1=\overline{{\mathrm{span}}}\{\xi_1,\ldots,\xi_r,\eta_1,\ldots,\eta_r\}$.
For a countable norm dense subset $\mathcal{S}_1\subseteq A_1$, we choose a countable set $\mathcal{U}_2\subseteq M$ of unitaries so that
\[\mathbb{T}_M(a)\in \overline{{\mathrm{conv}}}^{\|\cdot\|}\{u^*au:u\in \mathcal{U}_2\}, \ \ \ a\in \mathcal{S}_1.\]
A simple norm approximation then shows that this also holds for $a\in A_1$. Now define $A_2=C^*\{a,\mathbb{T}_M(a), u: a\in A_1, u\in \mathcal{U}_2\}$ and
$\K_2=\overline{{\mathrm{span}}}\{a\zeta: a\in A_1,\zeta\in \K_1\}$. Then (a)-(e) hold by construction. If $A_n$ and $\K_n$ have been defined, then choose a countable set
$\mathcal{U}_{n+1}\subseteq M$ of unitaries so that (e) holds, and define
\[A_{n+1}=C^*\{a,\mathbb{T}_M(a),u:a\in A_n, u\in \mathcal{U}_{n+1}\}\]
and
\[\K_{n+1}=\overline{{\mathrm{span}}}\{a\zeta:a\in A_n,\zeta\in \K_n\}.\]
Then (a)-(e) hold, and we define $A$ to be the norm closure of $\cup_{n\geq 1}A_n$, $N$ to be the von Neumann algebra generated by $A$, and $\K$ to be the norm closure of $\cup_{n\geq
1}\K_n$. Then $\K$ is a separable subspace of $\Hil$ and (i), (ii) and (iv) are immediately seen to hold. It remains to verify (iii).

The inclusion $\mathcal{Z}(M)\cap N\subseteq \mathcal{Z}(N)$ is clear, so we only need to show the reverse containment. Given $\vp >0$ and $a\in A_n$, choose unitaries $u_1,\ldots,u_k\in
\mathcal{U}_{n+1}$ and non-negative constants $\lambda_i$ summing to 1 so that
\begin{equation}\label{11.1}
\left\|\mathbb{T}_M(a)-\sum_{i=1}^k\lambda_iu_i^*au_i\right\| <\vp,
\end{equation}
possible by (e). It follows that $\mathbb{T}_M(a)\in N$ and thus lies in $\mathcal{Z}(N)$. When we apply $\mathbb{T}_N$ to \eqref{11.1}, the result is
\begin{equation}\label{11.2}
\left\|\mathbb{T}_M(a)-\sum_{i=1}^k \lambda_i \mathbb{T}_N(a)\right\| < \vp.
\end{equation}
Since $\vp >0$ was arbitrary, this shows that $\mathbb{T}_M$ and $\mathbb{T}_N$ agree on $A$, and thus agree on $N$ since both maps are normal \cite[Theorem 8.2.8 (vi)]{KR2}. If $z\in \mathcal{Z}(N)$, then
$z=\mathbb{T}_N(z)=\mathbb{T}_M(z)\in \mathcal{Z}(M)\cap N$, as required.
\end{proof}

The next lemma is the general version of Lemma \ref{lem2.10} and removes the assumption of a separably acting von Neumann algebra.

\begin{lem}\label{lem2.12}
Let $M\subseteq \B(\Hil)$ be a II$_1$ von Neumann algebra and let $x\in M$ be an element for which there exists a unital contractive $\mathcal{Z}(M)$-bimodule map $\Phi:M\to
\mathcal{Z}(M)$ such that $\Phi(x)=0$. Let
\[W=\{C^*XC: C\in {\mathrm{Col}}_{n,1}(M), n\geq 1\}\subseteq M,\]
where $X={\mathrm{diag}}(x,x,x,\ldots)$. Then $0\in \overline{W}^{w^*}$.
\end{lem}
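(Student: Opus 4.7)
The approach is to reduce to the separably acting setting of Lemma~\ref{lem2.10} via the surrogate subalgebra produced by Lemma~\ref{lem2.11}. I argue by contradiction, assuming $0\notin\overline{W}^{w^*}$. Since $W$ is uniformly bounded the weak operator and $w^*$-topologies agree on it, so Hahn--Banach separation produces vectors $\xi_1,\ldots,\xi_r,\eta_1,\ldots,\eta_r\in\Hil$ such that the normal functional $\psi(\cdot):=\sum_{i=1}^{r}\langle\,\cdot\,\xi_i,\eta_i\rangle$ satisfies $\mathrm{Re}\,\psi(w)\geq 1$ for every $w\in W$.

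Next I invoke Lemma~\ref{lem2.11} to obtain a separable $N$-invariant (hence reducing) subspace $\K\subseteq\Hil$ containing all of $\xi_1,\ldots,\xi_r,\eta_1,\ldots,\eta_r$, together with a von Neumann subalgebra $N\subseteq M$ such that $x\in N$ and $\mathcal{Z}(N)=\mathcal{Z}(M)\cap N$. Let $p\in N'$ be the projection onto $\K$, let $z_p\in\mathcal{Z}(N)$ be its central support in $N$, and set $N_0:=Np\subseteq\B(\K)$, which is a finite von Neumann algebra on a separable Hilbert space; the map $Nz_p\to N_0$, $a\mapsto ap$, is a $*$-isomorphism, so $\mathcal{Z}(N_0)=\mathcal{Z}(N)p$. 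Pick any conditional expectation $E:\mathcal{Z}(M)\to\mathcal{Z}(N)$, which exists by injectivity of abelian von Neumann algebras, and define $\Phi_1:N_0\to\mathcal{Z}(N_0)$ by $\Phi_1(ap):=E(\Phi(a))p$ for $a\in N$. Since $ap=0$ forces $a\in N(1-z_p)$ and hence $E(\Phi(a))p=E(\Phi(a))(1-z_p)p=0$, the map $\Phi_1$ is well-defined, unital, positive, $\mathcal{Z}(N_0)$-bimodular, and $\Phi_1(xp)=E(\Phi(x))p=0$.

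Applying Lemma~\ref{lem2.10} to $N_0$ and $xp$ then yields $0\in\overline{W_0}^{w^*}$ in $\B(\K)$, where $W_0=\{C_0^*X_0C_0:C_0\in\mathrm{Col}_{\infty,1}(N_0)\}$ and $X_0=\mathrm{diag}(xp,xp,\ldots)$. Since $\psi|_\K$ is normal on $\B(\K)$, for any $0<\vp<1$ there exists a finitely nonzero $C_0=(c_1p,\ldots,c_np)^t\in\mathrm{Col}_{n,1}(N_0)$ with $|\psi(C_0^*X_0C_0)|<\vp$. Lifting the entries canonically to $c_i\in Nz_p$ with $\sum_{i=1}^{n}c_i^*c_i=z_p$, I form the column $\tilde C:=(c_1,\ldots,c_n,1-z_p)^t\in\mathrm{Col}_{n+1,1}(N)\subseteq\mathrm{Col}_{n+1,1}(M)$ and set $\tilde X:=\mathrm{diag}(x,\ldots,x)$. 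Since $\K\subseteq z_p\Hil$, each $\xi_j$ is annihilated by $(1-z_p)x(1-z_p)$, and a direct computation yields $\psi(\tilde C^*\tilde X\tilde C)=\psi|_\K(C_0^*X_0C_0)$. Thus $\tilde C^*\tilde X\tilde C\in W$ with $|\psi(\tilde C^*\tilde X\tilde C)|<\vp<1$, contradicting $\mathrm{Re}\,\psi(w)\geq 1$ on $W$.

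The principal obstacle is that Lemma~\ref{lem2.10} is stated for II$_1$ von Neumann algebras, whereas $N_0$ is only known \emph{a priori} to be finite. This can be addressed either by strengthening the construction in Lemma~\ref{lem2.11} to force $N$ to be type II$_1$ (feasible since $M$ is II$_1$, e.g.\ by adjoining a II$_1$ subfactor of $M$ at each stage $A_n$), or by observing that the proof of Lemma~\ref{lem2.10} extends unchanged to finite von Neumann algebras on separable Hilbert spaces, since the only essential use of II$_1$ structure---the invocation of Lemma~\ref{2-1factor}---has an immediate matrix-factor analogue.
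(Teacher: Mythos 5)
Your proof is correct and follows essentially the same route as the paper's: contradiction via Hahn--Banach separation, reduction to a separably acting subalgebra via Lemma \ref{lem2.11}, cutting by the central support of the projection onto $\K$, applying Lemma \ref{lem2.10}, and lifting columns back by appending the complementary central projection, with your construction of $\Phi_1$ by composing with a conditional expectation $\mathcal{Z}(M)\to\mathcal{Z}(N)$ being a harmless variant of the paper's restrict-and-extend argument. The type issue you flag at the end is genuine but applies equally to the paper's own proof, which applies Lemma \ref{lem2.10} to $N(1-z_0)$ without verifying that it is II$_1$ rather than merely finite, and either of your proposed remedies repairs this.
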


\begin{proof}
We argue by contradiction, so suppose that $0\notin \overline{W}^{w^*}$. As in Lemma \ref{lem2.10}, there exist vectors $\xi_1,\ldots,\xi_r,\eta_1,\ldots,\eta_r\in \Hil$ so that the functional
$\psi\in M_*$ defined by $\psi(\cdot)=\sum_{i=1}^r\langle \cdot \xi_i,\eta_i\rangle$ satisfies
\begin{equation}\label{12.1}
{\mathrm{Re}}\,\psi(w)\geq 1,\ \ \ w\in W.
\end{equation}

By Lemma \ref{lem2.11}, choose $N$ and $\K$ with the stated properties and let $p\in N'$ be the projection onto $\K$. Let $\Phi_1$ be the restriction of $\Phi$ to span$\{\mathcal{Z}(N),x\}$.
The range of $\Phi_1$ is $\mathcal{Z}(N)$ since $\mathcal{Z}(N)=\mathcal{Z}(M)\cap N$ and $\Phi(x)=0$. Thus there is a completely positive extension, also denoted by $\Phi_1$, so that
$\Phi_1:N\to \mathcal{Z}(N)$ is a unital $\mathcal{Z}(N)$-bimodule map and $\Phi_1(x)=0$. The map $n\mapsto np:N\to \B(\K)$ is a $*$-homomorphism and its kernel is a $w^*$-closed ideal
in $N$, so has the form $Nz_0$ for some projection $z_0\in \mathcal{Z}(N)\subseteq\mathcal{Z}(M)$. Since $z_0p=0$, we have the relations $z_0\xi_i=0$, $z_0\eta_i=0$, $(1-z_0)\xi_i=\xi_i$,
and $(1-z_0)\eta_i=\eta_i$ for $1 \leq i\leq r$. Moreover, $n\mapsto np$ is a faithful representation of $N(1-z_0)$ on $(1-z_0)\K$.

 For $C\in {\mathrm{Col}}_{n,1-z_0}(N(1-z_0))$, extend to $\tilde{C}\in {\mathrm{Col}}_{n+1,1}(N)$ by placing $z_0$ in the last position. Then
\begin{equation}\label{12.2}
{\mathrm{Re}}\,\sum_{i=1}^r\langle C^*(X(1-z_0))C\xi_i,\eta_i\rangle=
{\mathrm{Re}}\,\sum_{i=1}^r \langle \tilde{C}^*X\tilde{C}\xi_i,\eta_i\rangle \geq 1
\end{equation}
from \eqref{12.1}. This contradicts Lemma \ref{lem2.10} applied to $N(1-z_0)$ acting on the separable Hilbert space $(1-z_0)\K$ with distinguished element $x(1-z_0)$ for which
$\Phi_1(x(1-z_0))=0$. Thus $0\in \overline{W}^{w^*}$ as required.
\end{proof}

\section{Main crossed product results}\label{main}

In this section we now apply the lemmas of Section \ref{prelim} to obtain our main results pertaining to C$^*$-algebra crossed products, specifically Proposition \ref{average} and Theorem \ref{intermediate}.

\begin{pro}\label{average}
Let $A$ be a unital simple C$^*$-algebra and let $\alpha$ be an outer $*$-automorphism of $A$. Given $x\in A$ and $\delta 
> 0$, there exist $a_1,\ldots ,a_k\in A$ such that 
$\sum_{i=1}^k a_i^*a_i=1$ and
\begin{equation}\label{average.1}
\left\|\sum_{i=1}^k a_i^*x\alpha(a_i)\right\| < \delta.
\end{equation}
\end{pro}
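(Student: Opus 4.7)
The plan is to reduce the norm statement in $A$ to a $w^*$-closure statement in $M := A^{**}$ via Lemma \ref{WtoN}(ii): it suffices to prove $0 \in \overline{W_M(x,\tilde\alpha)}^{w^*}$, where $\tilde\alpha := \alpha^{**}$. Working in the bidual makes the three von Neumann algebra averaging results of Section \ref{prelim} available.

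I would then decompose the identity of $\mathcal{Z}(M)$ as $1 = e_{\mathrm{po}} + e_\infty + e_1$, a sum of three mutually orthogonal $\tilde\alpha$-invariant central projections (any of which may be zero) for which $\tilde\alpha|_{Me_{\mathrm{po}}}$ is properly outer, $\tilde\alpha|_{M(e_\infty + e_1)}$ is inner and implemented by a unitary $u \in M(e_\infty + e_1)$, $Me_\infty$ has no type II$_1$ summand, and $Me_1$ is type II$_1$. Given any $w^*$-neighborhood $U$ of $0 \in M$, it is enough to produce columns $C^{(k)}$ with entries in $Me_k$ and $(C^{(k)})^*C^{(k)} = e_k$ satisfying $(C^{(k)})^*x\tilde\alpha(C^{(k)}) \in U$ for each $k$; the concatenation $C$ then satisfies $C^*C = 1$ and $C^*x\tilde\alpha(C) = \sum_k (C^{(k)})^*x\tilde\alpha(C^{(k)}) \in U$ by orthogonality and $\tilde\alpha$-invariance of the $e_k$.

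The three columns are provided by the three averaging lemmas, after identifying $Ae_k \subseteq Me_k = (Ae_k)''$ with a copy of $A$ via the compression $a \mapsto ae_k$, which is injective by simplicity of $A$ whenever $e_k \neq 0$. Lemma \ref{propouter} produces $C^{(\mathrm{po})}$, while Lemma \ref{no2-1} produces $C^{(\infty)}$, using that $\Ad u$ is outer on $Ae_\infty$ because $\alpha$ is outer on $A$. The II$_1$ summand is the main obstacle: Lemma \ref{lem2.12} averages with the same element $y$ on both sides, producing small $\sum a_i^*ya_i$, not the target form $\sum a_i^*x\tilde\alpha(a_i)$. The remedy is to feed $y = xe_1u$ into Lemma \ref{lem2.12}; as input it requires a central-valued bimodule map annihilating $xe_1u$, and this is exactly what Corollary \ref{Zmap2} (applied with ideal $J = 0$) supplies. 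The resulting column $C^{(1)}$ over $Me_1$ makes $\sum a_i^*(xe_1u)a_i$ arbitrarily $w^*$-close to $0$, and right-multiplication by $u^*$, a $w^*$-continuous operation, converts this to $\sum a_i^*xua_iu^* = \sum a_i^*x\tilde\alpha(a_i)$ lying in the prescribed neighborhood. Lemma \ref{WtoN}(ii) then upgrades $0 \in \overline{W_M(x,\tilde\alpha)}^{w^*}$ to the required norm approximation in $A$.
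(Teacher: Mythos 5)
Your proposal is correct and follows essentially the same route as the paper: the same three-fold central decomposition of $A^{**}$ into the properly outer, inner-with-no-II$_1$, and inner-II$_1$ summands, handled respectively by Lemma \ref{propouter}, Lemma \ref{no2-1}, and the combination of Corollary \ref{Zmap2} with Lemma \ref{lem2.12} (including the right-multiplication by $u^*$ to convert $\sum a_i^*xua_i$ into $\sum a_i^*x\tilde\alpha(a_i)$), followed by Lemma \ref{WtoN}(ii). The only cosmetic difference is that you concatenate the three columns directly in $M$, whereas the paper amalgamates the three $w^*$-statements on the orthogonal subspaces and passes through Lemma \ref{WtoN}(i).
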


\begin{proof}
We may assume that $A$ is in its universal representation on a Hilbert space $\Hil$ so that $A^{**}$ is identified with $A''$. Let $\alpha^{**}$ be the extension of $\alpha$ to a
$*$-automorphism of $A^{**}$. There exists an $\alpha^{**}$-invariant projection $z_1\in \Z(A^{**})$ so that the restriction of $\alpha^{**}$ to $A^{**}z_1$ is properly outer while the
restriction to $A^{**}(1-z_1)$ is inner and implemented by $u(1-z_1)$ for a unitary $u\in A^{**}$. This automorphism leaves invariant the type decomposition of $A^{**}(1-z_1)$ so we may
choose projections $z_2,z_3\in \Z(A^{**})$ so that $z_2+z_3=1-z_1$ and $A^{**}z_2$ is type II$_1$ while $A^{**}z_3$ has no type II$_1$ part. Simplicity of $A$ shows that $A$ has a
faithful represention $\pi_i$ as $Az_i$ on each $z_i\Hil$ for $1\leq i \leq 3$.

By Lemma \ref{propouter} applied to $\pi_1(A)$ acting on $z_1\Hil$, we see that $0\in \ovl{W_{\pi_1(A)}(\pi_1(x),\alpha)}^{w^*}$. For the representation $\pi_2$ of $A$ on $z_2\Hil$ where $\alpha$ is implemented by $uz_2$, we
apply Corollary \ref{Zmap2} to obtain a unital completely positive $\Z(A^{**}z_2)$-bimodular map $\Phi:A^{**}z_2\to \Z(A^{**})z_2$ such that $\Phi(xuz_2)=0$, and then we use Lemma
\ref{lem2.12} to deduce that $0\in\ovl{W_{\pi_2(A)}(\pi_2(x),\alpha)}^{w^*}$. We reach the same conclusion for the representation $\pi_3$ of $A$ on $z_3\Hil$ by appealing to Lemma \ref{no2-1}. Amalgamation of these 
statements on the three orthogonal subspaces that span $\Hil$ leads to $0\in \ovl{W_A(x,\alpha)}^{w^*}$, so $0\in \ovl{W_{A^{**}}(x,\alpha^{**})}^{w^*}$ by Lemma \ref{WtoN} (i). The second part of this lemma then shows that 
$0\in \ovl{W_A(x,\alpha)}^{\|\cdot\|}$ and the result now follows.
\end{proof}

If a discrete group $G$ acts on a C$^*$-algebra $A\subseteq \B(\Hil)$ by automorphisms $\{\alpha_g: g\in G\}$,  then the reduced C$^*$-crossed product $A\rtimes_{\alpha,r}G$ is constructed as follows (see \cite{VD}). We define a representation of $A$ on $\Hil\otimes \ell^2(G)$ by
\begin{equation}\label{rcp.1}
\pi_\alpha(a)(\xi\otimes \delta_t)=\alpha_{t^{-1}}(a)\xi\otimes \delta_t,\ \ \ \xi\in \Hil,\ a\in A,\ t\in G,
\end{equation}
and a unitary representation of $G$ by
\begin{equation}\label{rcp.2}
\lambda(g)(\xi\otimes \delta_t)=\xi\otimes \delta_{gt},\ \ \ \xi\in \Hil,\ g,t\in G.
\end{equation}
Then $A\rtimes_{\alpha,r}G$ is the C$^*$-algebra generated by these operators. If the representation of $A$ on $\Hil$ extends to a normal faithful representation of $A^{**}$ on $\Hil$, then the von Neumann crossed product $A^{**}\rtimes_{\alpha^{**}}G$ is the $w^*$-closure of $A\rtimes_{\alpha,r}G$. These operators satisfy
\begin{equation}\label{rcp.3}
\lambda(g)\pi_\alpha(a)\lambda(g)^*=\pi_\alpha(g(a)),\ \ \ a\in A,\ g\in G,
\end{equation}
showing that there is always a representation of $A$ for which the automorphisms $\alpha_g$ are spatially implemented by unitary operators $a_g$. If we take a faithful normal representation of $\mathrm{C}^*(A\cup\{a_g:g\in G\})^{**}$, then we may always assume that the automorphisms are spatially implemented, an important observation for the calculation in \eqref{unt.10}. To ease notation in this section, we will suppress $\lambda$ and $\pi_\alpha$, writing the generators of $A\rtimes_{\alpha,r}G$ simply as $a$ and $g$, subject to the condition
\begin{equation}\label{rcp.4}
gag^{-1}=\alpha_g(a),\ \ \ a\in A,\ g\in G.
\end{equation}
It will be necessary to return to the more cumbersome notation in Section \ref{tcp}.

There is a faithful normal conditional expectation $E:A^{**}\rtimes_{\alpha^{**}}G\to A^{**}$ (see \cite[p.365]{Tak1}) and this restricts to a conditional expectation of $A\rtimes_{\alpha,r}G$ onto $A$ which we also denote by $E$. Each element $x\in
A^{**}\rtimes_{\alpha^{**}}G$ has a Fourier series $\sum_{g\in G}x_gg$, and this converges in the $B$-topology as introduced by Bures in \cite{Bur} (see \cite[Section 3]{CaSm} for a detailed discussion of this topology). We define the {\it{support}} ${\mathrm{supp}}(x)$ to be the set of those $g\in G$ for which $x_g\ne 0$, equivalent to
the condition $E(xg^{-1})\ne 0$. For a subspace $X\subseteq A\rtimes_{\alpha,r}G$, we define ${\mathrm{supp}}(X)$ to be $\cup\{{\mathrm{supp}}(x):x\in X\}$. We say that $X$ is
{\emph{full}} if $X$ contains each element $y\in A\rtimes_{\alpha,r}G$ for which ${\mathrm{supp}}(y)\subseteq {\mathrm{supp}}(X)$.

The following lemma appears as Lemma 2.1 in \cite{CaSm} and we quote it here for the reader's convenience. The first three parts are due to Haagerup and Kraus \cite[Theorem 1.9]{HK}.

\begin{lem}\label{HKlem}
Let $G$ be a discrete  group with the $AP$ acting by $*$-automorphisms $\alpha_g$, $g\in G$, on a von Neumann algebra $M\subseteq B(H)$. Then there exist  a net
$(f_\gamma)_{\gamma\in\Gamma}$ of finitely supported functions on $G$ and a net $(T_\gamma:\cp\to\cp)_{\gamma\in\Gamma}$ of normal maps with the following properties:
\begin{itemize}
\item[\rm (i)]
For each $\gamma\in \Gamma$, $M_{f_\gamma}$ is completely bounded.
\item[\rm (ii)]
For each $g\in G$, $\underset{\gamma}{\lim}\,  f_\gamma(g)=1$.
\item[\rm (iii)]
For each function $h\in A(G)$, $\underset{\gamma}{\lim}\,  \|M_{f_\gamma}h-h\|=0$.
\item[\rm (iv)]
For each $\gamma\in\Gamma$, $T_\gamma$ is completely bounded.
\item[\rm (v)]
For each $y\in \cp$, $\underset{\gamma}{\lim}\,  T_\gamma(y)=y$ in the $w^*$-topology.
\item[\rm (vi)]
For each $x\in M$ and $g\in G$, $T_\gamma(\pi(x)\lambda_g)=f_\gamma(g)\pi(x)\lambda_g$.
\end{itemize}
\end{lem}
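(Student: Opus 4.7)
The lemma bundles together the Haagerup–Kraus characterization of the approximation property with the standard lift of a Fourier multiplier on $A(G)$ to a normal completely bounded map on the crossed product. My plan is to take the definition of the $AP$ as the starting point and produce the net $(f_\gamma)$, then construct the maps $T_\gamma$ by a direct formula in terms of the canonical conditional expectation.

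First I would invoke the definition of the $AP$: $G$ has the $AP$ exactly when the constant function $1$ lies in the weak-$*$ closure, inside the space $M_0A(G)$ of completely bounded Fourier multipliers, of the finitely supported functions on $G$. This immediately produces a net $(f_\gamma)_{\gamma\in\Gamma}$ of finitely supported $f_\gamma:G\to\mathbb{C}$ such that the associated Fourier multipliers $M_{f_\gamma}$ on $A(G)$ have uniformly bounded cb-norm and converge to the identity in the weak-$*$ sense. Complete boundedness in (i) is automatic because each $f_\gamma$ has finite support and hence $M_{f_\gamma}$ is finite-rank on $A(G)$. Pointwise convergence in (ii) is obtained by pairing $M_{f_\gamma}$ against the coefficient functionals that pick out $f(g)$. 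For (iii), I would use the Banach–Steinhaus / Haagerup–Kraus upgrade: weak-$*$ convergence in $M_0A(G)$ combined with uniform cb-norm control yields norm convergence $\|M_{f_\gamma}h-h\|\to 0$ first on a norm-dense subset of $A(G)$ (e.g.\ the finitely supported elements of $A(G)$) and then, by the uniform bound, on all of $A(G)$.

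Next I would produce the maps $T_\gamma$ from the multipliers $M_{f_\gamma}$. Writing $E:M\rtimes_\alpha G\to M$ for the canonical faithful normal conditional expectation and $\lambda_g$ for the unitary implementing $g$, I define
\begin{equation*}
T_\gamma(y)\ =\ \sum_{g\in \mathrm{supp}(f_\gamma)} f_\gamma(g)\, E(y\lambda_g^{-1})\,\lambda_g,\qquad y\in M\rtimes_\alpha G.
\end{equation*}
Since $f_\gamma$ has finite support, this is a finite sum, and normality of $T_\gamma$ follows from normality of $E$, proving the normality required in the statement. Applying $T_\gamma$ to $x\lambda_g$ with $x\in M$, $g\in G$ collapses the sum to $f_\gamma(g)x\lambda_g$, giving (vi). For (iv), complete boundedness of $T_\gamma$ is controlled by $\|M_{f_\gamma}\|_{cb}$ via the Haagerup–Kraus correspondence between cb-multipliers on $A(G)$ and cb-multipliers on $M\rtimes_\alpha G$ (the content of \cite[Theorem~1.9]{HK}).

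Finally, (v) follows by a density argument: the $\ast$-algebra $\mathcal{A}$ of finite sums $\sum_g x_g\lambda_g$ with $x_g\in M$ is $w^*$-dense in $M\rtimes_\alpha G$, and on $\mathcal{A}$ the convergence $T_\gamma(y)\to y$ is immediate from (ii) and (vi). The uniform complete boundedness from (iv) then lets one push this convergence from $\mathcal{A}$ to all of $M\rtimes_\alpha G$ in the $w^*$-topology, using that bounded nets on a $w^*$-dense subspace test $w^*$-convergence everywhere. The main obstacle in the whole argument is the correspondence between cb-multipliers on $A(G)$ and normal cb-maps on the crossed product with controlled norms — i.e.\ showing that the formula for $T_\gamma$ above inherits a cb-bound from $\|M_{f_\gamma}\|_{cb}$ uniformly in $\gamma$; this is precisely the technical heart of \cite[Theorem~1.9]{HK} and what makes the lemma nontrivial beyond the finite-support case.
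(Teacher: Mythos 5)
The paper offers no proof of this lemma: it is quoted verbatim from \cite[Lemma 2.1]{CaSm}, with parts (i)--(iii) attributed to \cite[Theorem 1.9]{HK}, so the only basis for comparison is whether your sketch would actually reconstruct the cited result. It would not as written, because of one load-bearing false assumption: you claim the $AP$ furnishes a net of finitely supported functions whose multipliers $M_{f_\gamma}$ have \emph{uniformly bounded} cb-norm. The $AP$ only asserts that the constant function $1$ lies in the $\sigma(M_0A(G),Q)$-closure of the finitely supported functions; a uniformly cb-bounded net converging pointwise to $1$ is precisely weak amenability, which is strictly stronger (e.g.\ $\mathbb{Z}^2\rtimes SL(2,\mathbb{Z})$ has the $AP$ but is not weakly amenable). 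Both places where you invoke the uniform bound are exactly the nontrivial parts of the lemma: your derivation of (iii) (``weak-$*$ convergence plus uniform cb-control gives point-norm convergence on $A(G)$'') and your derivation of (v) (``push $w^*$-convergence from the dense subalgebra of finite sums to all of $\cp$ using the uniform bound''). Without the bound, neither density argument closes.

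The correct routes are different. For (iii), one upgrades from weak to norm convergence by passing to convex combinations (Mazur/Hahn--Banach: the point-weak and point-norm closures of a convex set of operators coincide); convex combinations preserve finite support and the $\sigma(M_0A(G),Q)$-convergence, so the net can be replaced rather than bounded. For (v), one does not use a bounded-net density argument at all: the content of \cite[Theorem 1.9]{HK} is that $u\mapsto m_u^{M,\alpha}$ is continuous from $(M_0A(G),\sigma(M_0A(G),Q))$ into $CB^\sigma(\cp)$ with the (stable) point-$w^*$ topology, so $T_\gamma\to\mathrm{id}$ point-$w^*$ follows directly from $f_\gamma\to 1$ in $\sigma(M_0A(G),Q)$. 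The remaining parts of your sketch are fine and essentially cost-free: (i) because finitely supported functions give finite-rank multipliers, (ii) because point evaluations lie in $Q$, (vi) by direct computation with your formula $T_\gamma(y)=\sum_g f_\gamma(g)E(y\lambda_g^*)\lambda_g$, and (iv) already follows from the crude estimate $\|T_\gamma\|_{cb}\leq\sum_g|f_\gamma(g)|$ since the statement only asks that each $T_\gamma$ individually be completely bounded.
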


\begin{pro}\label{supp}
Let $A$ be a unital simple C$^*$-algebra and let $G$ be a discrete group acting on $A$ by outer $*$-automorphisms.
\begin{itemize}
\item[(i)] Let $X\subseteq A\rtimes_{\alpha,r}G$ be a norm closed $A$-bimodule and let $S\subseteq G$ be ${\mathrm{supp}}(X)$. Then
\begin{equation}\label{supp.1}
X_S:=\overline{{\mathrm{span}}}^{\|\cdot\|}\{Ag:g\in S\}
\end{equation}
is a norm closed $A$-bimodule and $X_S\subseteq X$.
\item[(ii)]
For each subset $S\subseteq G$, let
\begin{equation}\label{supp.2}
Y_S:=\{y\in A\rtimes_{\alpha,r}G:{\mathrm{supp}}(y)\subseteq S\}.
\end{equation}
Then the map $S\mapsto Y_S$ gives a bijection between the subsets of $G$ and the set of full $A$-bimodules in $A\rtimes_{\alpha,r}G$.
\item[(iii)]
If $G$ has the approximation property, then each norm closed $A$-bimodule $X\subseteq A\rtimes_{\alpha,r}G$ is full and has the form $X_S$ where $S={\mathrm{supp}}(X)$. Moreover, $X_S=Y_S$.
\end{itemize}
\end{pro}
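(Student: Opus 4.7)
The plan is to treat the three parts in sequence, drawing mainly on Proposition \ref{average} for part (i) and Lemma \ref{HKlem} for part (iii). For part (i), $X_S$ is visibly norm-closed and is an $A$-bimodule since $Ag \cdot A = A\alpha_g(A)g = Ag$. The substantive content is $X_S \subseteq X$, equivalently $Ag \subseteq X$ for each $g \in S$. I would introduce the norm-closed two-sided ideal $I_g := \{a \in A : ag \in X\}$ of $A$ (two-sidedness uses $\alpha_g(A) = A$ together with the $A$-bimodule structure of $X$); simplicity of $A$ forces $I_g = \{0\}$ or $I_g = A$, so it suffices to exhibit one nonzero element of $I_g$. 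Given $x \in X$ with $x_g \neq 0$, for each $h \neq g$ the automorphism $\alpha_{hg^{-1}}$ is outer, and Proposition \ref{average} supplies $(a_i) \subset A$ with $\sum_i a_i^* a_i = 1$ making $\|\sum_i a_i^* x_h \alpha_{hg^{-1}}(a_i)\|$ arbitrarily small. The averaging $y \mapsto \sum_i a_i^* y \alpha_{g^{-1}}(a_i)$ preserves membership in $X$, damps the $h$-Fourier coefficient as above, and sends the $g$-Fourier coefficient to the UCP compression $\sum_i a_i^* y_g a_i$. Iterating over finitely many non-$g$ Fourier components, combined with a preliminary approximation of $x$ by an element of the algebraic crossed product to control any Fourier tail, should produce elements of $X$ arbitrarily close in norm to $cg$ for some $c \in A$; norm-closedness of $X$ then places $cg$ in $X$. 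The main obstacle is ensuring $c \neq 0$: iterated UCP compressions $\sum a_i^* x_g a_i$ could in principle annihilate $x_g$. Resolving this likely requires either a simultaneous-averaging refinement of Proposition \ref{average} (obtainable by applying that proposition inside $\mathbb{M}_n(A)$, which remains simple when $A$ is) so that several Fourier coefficients can be damped in a single averaging step, or a quantitative use of simplicity of $A$ (via $\overline{A x_g A} = A$) to choose averagings that preserve a positive lower bound on $c$.

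Part (ii) is essentially formal. The $Y_S$ are full by construction. Injectivity of $S \mapsto Y_S$ follows because $g \in Y_S$ iff $\mathrm{supp}(g) = \{g\} \subseteq S$ iff $g \in S$. For surjectivity, if $Y$ is a full $A$-bimodule with $S := \mathrm{supp}(Y)$, then $Y \subseteq Y_S$ is immediate from the support condition, and $Y_S \subseteq Y$ follows from fullness.

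For part (iii), assuming $G$ has the approximation property, Lemma \ref{HKlem} furnishes normal maps $T_\gamma$ with $T_\gamma(\pi_\alpha(a)\lambda_g) = f_\gamma(g)\pi_\alpha(a)\lambda_g$ for finitely supported $f_\gamma$, with $T_\gamma \to \mathrm{id}$ in the point-norm topology on $A \rtimes_{\alpha, r} G$ (a standard consequence of AP given uniform cb-boundedness of the $M_{f_\gamma}$ and norm density of the algebraic crossed product). For $y$ with $\mathrm{supp}(y) \subseteq S = \mathrm{supp}(X)$, $T_\gamma(y) = \sum_g f_\gamma(g) y_g g$ is a finite sum whose nonzero terms $f_\gamma(g) y_g g$ each lie in some $Ag$ with $g \in S$, hence in $X_S \subseteq X$ by part (i). Thus $T_\gamma(y) \in X$, and $y = \lim T_\gamma(y) \in X$ by norm closedness. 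This proves fullness of $X$, so $X = Y_S$ by part (ii). The same multiplier argument applied to an arbitrary $y \in Y_S$ places $y$ in $X_S$, yielding the remaining identity $X_S = Y_S$.
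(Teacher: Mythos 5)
Your parts (ii) and (iii) are correct and follow the paper's route: (ii) is formal, and (iii) is exactly the Haagerup--Kraus multiplier argument via Lemma \ref{HKlem}, squeezing $X_S\subseteq X\subseteq Y_S$ together by point-norm convergence of the $T_\gamma$. The issue is part (i), where you have correctly located the central difficulty --- that iterated compressions $x_g\mapsto\sum_i a_i^*x_ga_i$ of the distinguished Fourier coefficient could drive it to $0$ --- but you have not closed it, and neither of your two suggested repairs is what is needed. The simultaneous-averaging idea in $\mathbb{M}_n(A)$ does not obviously apply: you would need to damp several coefficients $y_h$ with \emph{different} outer automorphisms $\alpha_{hg^{-1}}$ using a single family $(a_i)$, and Proposition \ref{average} only handles one automorphism at a time; a block-diagonal automorphism lives on $A\oplus\cdots\oplus A$, which is not simple, so the proposition does not apply there.

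The paper's resolution is different and worth recording. First, simplicity of $A$ is used \emph{before} any averaging: choosing $b_i,c_i$ with $\sum_i b_iE(x_0g_0^{-1})c_i=1$ and replacing $x_0$ by $\sum_i b_ix_0\alpha_{g_0}^{-1}(c_i)\in X$ normalizes the $g_0$-coefficient to be exactly $1$. After this normalization the averaging map $\Psi(z)=\sum_i a_i^*z\alpha_{g_0}^{-1}(a_i)$ sends the $g_0$-coefficient to $\sum_i a_i^*\cdot 1\cdot a_i=1$, so it is preserved \emph{identically}, not merely bounded below --- this is the step your sketch is missing. Second, rather than iterating over all non-$g_0$ coefficients (which would require controlling how each averaging perturbs the others), the paper selects, for fixed $\varepsilon$, a pair $(x,y)$ with $x\in X$, $E(xg_0^{-1})=1$, $y$ in the algebraic crossed product, $\|x-y\|<\varepsilon$, and $y$ of \emph{minimal length}; a single application of Proposition \ref{average} to one extraneous coefficient $y_{g_1}$ then produces a strictly shorter admissible pair, a contradiction, so $y=y_{g_0}g_0$ already. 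Letting $\varepsilon=2^{-n}$ gives $g_0\in X$ directly (making your ideal $I_{g_0}$ reduction unnecessary, though it is harmless). Without the normalization to $1$ and the minimal-length device, the argument as you describe it does not go through.
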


\begin{proof}
(i) \quad Let $X\subseteq A\rtimes_{\alpha,r}G$ be a norm closed $A$-bimodule and let $S\subseteq G$ be the support of $X$. We first show that 
$X_S\subseteq X$.

Fix $g_0\in S$ and choose $x_0\in X$ so that $E(x_0g_0^{-1})\ne 0$. By simplicity of $A$, there exist elements $b_1,\ldots,b_n,c_1,\ldots,c_n\in A$ so that
$\sum_{i=1}^n b_iE(x_0g_0^{-1})c_i=1$. If we replace $x_0$ by $\sum_{i=1}^n b_ix_0\alpha_{g_0}^{-1}(c_i)\in X$, then we may assume that $E(x_0g_0^{-1})=1$.

Let $Y={\mathrm{span}}\{Ag: g\in G\}$, a norm dense subspace of $A\rtimes_{\alpha,r}G$. Given $\vp\in (0,1)$, consider the set $\mathcal P$ of pairs $(x,y)$ where $x\in X$,
$E(xg_0^{-1})=1$, $y\in Y$ and $\|x-y\|<\vp$. The existence of $x_0$ above shows that $\mathcal{P}$ is nonempty, so we may choose a pair $(x,y)\in \mathcal P$ for which the sum $y=\sum_{g\in F}y_gg$ (where $F\subseteq G$ is finite) has minimal length. Then
\begin{equation}\label{supp.3}
\|1-y_{g_0}\|=\|E((x-y)g_0^{-1})\|\leq \|x-y\|<\vp <1,
\end{equation}
so $y_{g_0}\ne 0$. Now suppose that there exists $g_1\ne g_0$ so that $y_{g_1}\ne 0$. We will show that this contradicts the minimal length in the selection of the pair $(x,y)$.

Choose $\delta >0$ so that $\|x-y\|+\delta <\vp$. Noting that $\alpha_{g_1g_0^{-1}}$ is outer on $A$, apply Proposition \ref{average} to obtain elements $a_1,\ldots,a_k\in A$ so that $\sum_{i=1}^k
a_i^*a_i=1$ and
\begin{equation}\label{supp.4}
\left\|\sum_{i=1}^k a_i^*y_{g_1}\alpha_{g_1g_0^{-1}}(a_i)\right\| < \delta.
\end{equation}
Define a complete contraction $\Psi:A\rtimes_{\alpha,r}G\to A\rtimes_{\alpha,r}G$ by
\begin{equation}\label{supp.5}
\Psi(z)=\sum_{i=1}^k a_i^* z \alpha_{g_0}^{-1}(a_i),\ \ \ z\in A\rtimes_{\alpha,r}G,
\end{equation}
and note that $\Psi$ maps $X$ to itself. The $g_0$-coefficient of $\Psi(x)$ remains 1, and the $g_1$-coefficient of $\Psi(y)$ is $\sum_{i=1}^k a_i^*y_{g_1}\alpha_{g_1g_0^{-1}}(a_i)$ so
has norm at most $\delta$ from \eqref{supp.4}. Write $\Psi(y)=\sum_{g\in F}\hat{y}_gg$. Then $\|\hat{y}_{g_1}\|<\delta$, so
\begin{equation}\label{supp.6}
\|\Psi(x)-(\Psi(y)-\hat{y}_{g_1}g_1)\|\leq \|\Psi(x-y)\|+\delta\leq \|x-y\|+\delta<\vp.
\end{equation}
Thus $(\Psi(x),\Psi(y)-\hat{y}_{g_1}g_1)\in \mathcal P$ and the second entry is a finite sum that is strictly shorter than the sum defining $y$ since its $g_1$-coefficient is now 0. This contradicts the minimal choice of
$(x,y)$, and so we conclude that $y_{g_1}=0$ for $g_1\ne g_0$, and hence that $y=y_{g_0}g_0$.

If we now apply this argument to successive choices $\vp=2^{-n}$ for $n\geq 1$, then we obtain sequences $\{x_n\}_{n=1}^\infty$ from $X$ and $\{y_n\}_{n=1}^\infty$ from $A$ so that
$E(x_ng_0^{-1})=1$ and $\|x_n-y_ng_0\|<2^{-n}$. Then $1-y_n=E(x_ng_0^{-1}-y_n)$, so $\lim_{n\to\infty}\|1-y_n\|=0$, and also $\lim_{n\to\infty}\|x_n-g_0\|=0$. We conclude that $g_0\in X$ and
so $Ag_0\subseteq X$. Since $g_0\in S$ was arbitrary, it follows that $X_S\subseteq X$, proving (i).

\medskip

\noindent (ii)\quad This part is immediate from the definition of fullness.

\medskip

\noindent (iii)\quad Let $X\subseteq A\rtimes_{\alpha,r}G$ be a norm closed $A$-bimodule and let $S={\mathrm{supp}}(X)$. By (i) and the definition of $Y_S$, we have
\begin{equation}\label{supp.7}
X_S\subseteq X\subseteq Y_S.
\end{equation}
 From Lemma \ref{HKlem} applied to $M=A^{**}$, we obtain a net 
of completely bounded maps $\{T_\gamma :A^{**}\rtimes_{\alpha^{**}}G\to A^{**}\rtimes_{\alpha^{**}}G\}_{\gamma\in\Gamma}$ converging in the point $w^*$-topology to the identity. As in
\cite[Theorem 1.9]{HK}, we also may assume that convergence is in the point norm topology on $A\rtimes_{\alpha,r}G$. Since each $T_\gamma$ arises from a finitely supported multiplier, we
see that $T_\gamma$ maps $Y_S$ into $X_S$ and point norm convergence to the identity establishes equality of the three spaces in \eqref{supp.7}. Since $X=Y_S$, fullness of $X$ follows from (ii).
\end{proof}

\begin{rem}\label{equalmod}
It may be that the $A$-bimodules defined in \eqref{supp.1} and \eqref{supp.2} are always equal. We know of no instance where $X_S \ne Y_S$, and any counterexample would require a group without the approximation property.
\end{rem}

\begin{thm}\label{intermediate}
Let $A$ be a unital simple C$^*$-algebra and let $G$ be a discrete group acting on $A$ by outer $*$-automorphisms.
There is a bijective correspondence between subgroups $H$ of $G$ and C$^*$-algebras $B$ satisfying $A\subseteq B\subseteq A\rtimes_{\alpha,r}G$ given by
\begin{equation}\label{intermediate.1}
H\mapsto A\rtimes_{\alpha,r}H.
\end{equation}
\end{thm}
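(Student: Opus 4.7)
The plan is to verify that $H\mapsto A\rtimes_{\alpha,r}H$ is a well-defined, injective, and surjective correspondence between subgroups of $G$ and intermediate C$^*$-algebras. Well-definedness is classical: for any subgroup $H\leq G$, $A\rtimes_{\alpha,r}H$ embeds isometrically as a C$^*$-subalgebra of $A\rtimes_{\alpha,r}G$ containing $A$. Injectivity follows from the support calculus: since $\mathrm{supp}(A\rtimes_{\alpha,r}H)=H$, distinct subgroups produce distinct intermediate C$^*$-algebras.

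For surjectivity, given an intermediate C$^*$-algebra $B$, I would set $S=\mathrm{supp}(B)$ and first show $S$ is a subgroup of $G$. Because $A\subseteq B$ and $B$ is an algebra, $B$ is a norm closed $A$-bimodule, so Proposition \ref{supp}(i) yields $X_S\subseteq B$; in particular every $g\in S$ itself lies in $B$ (via $1\cdot g\in Ag\subseteq B$). Now $1\in A\subseteq B$ forces $e\in S$; for $g\in S$, $g\in B$ gives $g^{-1}=g^{*}\in B$, so $g^{-1}\in S$; and for $g,h\in S$, $gh\in B$ satisfies $E(gh\cdot(gh)^{-1})=1\neq 0$, so $gh\in S$. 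Thus $S$ is a subgroup.

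It remains to show $B=A\rtimes_{\alpha,r}S$. The forward inclusion $A\rtimes_{\alpha,r}S\subseteq B$ is immediate: since $S$ is a subgroup, products $(ag)(bh)=a\alpha_g(b)gh$ stay inside $\overline{\mathrm{span}}\{Ag:g\in S\}=X_S$, so $A\rtimes_{\alpha,r}S$ coincides with $X_S$ as a subspace of $A\rtimes_{\alpha,r}G$, and $X_S\subseteq B$ by Proposition \ref{supp}(i). For the reverse inclusion, I would invoke the canonical conditional expectation $E_S:A\rtimes_{\alpha,r}G\to A\rtimes_{\alpha,r}S$ (which exists for any subgroup, preserves Fourier coefficients indexed by $S$, and annihilates the rest). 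For $x\in B$, $E_S(x)\in A\rtimes_{\alpha,r}S\subseteq B$, so $y:=x-E_S(x)\in B$ satisfies $\mathrm{supp}(y)\subseteq G\setminus S$; but also $\mathrm{supp}(y)\subseteq\mathrm{supp}(B)=S$, so $\mathrm{supp}(y)=\emptyset$, and faithfulness of the canonical conditional expectation $E:A\rtimes_{\alpha,r}G\to A$ forces $y=0$. The main obstacle is marshalling Proposition \ref{supp}(i) together with the existence and support-truncating behavior of $E_S$; once those are in hand the proof reduces to Fourier-coefficient bookkeeping. A secondary point is verifying that $\mathrm{supp}(A\rtimes_{\alpha,r}H)=H$, which is needed for injectivity but is straightforward from the definitions.
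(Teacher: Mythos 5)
Your proposal is correct and follows essentially the same route as the paper: Proposition \ref{supp}(i) shows the support $S$ of $B$ is a subgroup with $X_S=A\rtimes_{\alpha,r}S\subseteq B$, and the conditional expectation onto $A\rtimes_{\alpha,r}S$ gives the reverse containment. The only cosmetic differences are that the paper constructs $E_H$ at the level of $A^{**}\rtimes_{\alpha^{**}}G$ and invokes $B$-topology continuity to justify the Fourier-coefficient formula (where you simply cite it as the canonical expectation), and it phrases the final step as ``$E_H$ is the identity on $Y_H\supseteq B$'' rather than your equivalent ``$x-E_S(x)$ has empty support, hence vanishes.''
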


\begin{proof}
Injectivity of the map $H\mapsto A\rtimes_{\alpha,r}H$ follows from Proposition \ref{supp} (ii) and so it suffices to prove that it is surjective. Fix an intermediate C$^*$-algebra $B$
and let $H\subseteq G$ be its support. By Proposition \ref{supp} (i), $B$ contains each element of $H$ and so $H$ is a subgroup of $G$. Let $E_H$ be the normal conditional expectation of
$A^{**}\rtimes_{\alpha^{**}}G$ onto $A^{**}\rtimes_{\alpha^{**}}H$ given by ${\mathrm{Ad}}W^*\circ (I\otimes E_{L(H)})\circ{\mathrm{Ad}}W$ where $E_{L(H)}$ is the conditional expectation of $L(G)$
onto $L(H)$ and $W$ is the operator of \cite[(2.4)]{CaSm}. By \cite[Lemma 3.1 (ii)]{CaSm2}, $E_H$ is continuous in the $B$-topology (this lemma does not require proper outerness of the
action on $A^{**}$). Thus, for each $x\in A^{**}\rtimes_{\alpha^{**}}G$ with Fourier series $x=\sum_{g\in G}x_gg$, we have
\begin{equation}\label{intermediate.2}
E_H(x)=\sum_{h\in H}x_hh.
\end{equation}
In particular $E_H$ maps $A\rtimes_{\alpha,r}G$ onto $A\rtimes_{\alpha,r}H$ and also acts as the identity on $Y_H$ as defined in Proposition \ref{supp} (ii). Since $B\subseteq Y_H$, we
see that $E_H$ acts as the identity on $B$, establishing that $B\subseteq A\rtimes_{\alpha,r}H$. The reverse containment follows from Proposition \ref{supp} (i), and this proves that
every intermediate C$^*$-subalgebra has the form $A\rtimes_{\alpha,r}H$ for a subgroup $H\subseteq G$.
\end{proof}

We conclude this section with some remarks on consequences of the last two results.
\begin{rem}\label{conseq1}
(i) \quad In \cite{Ki}, Kishimoto showed that crossed products of unital simple C$^*$-algebras by outer actions of discrete groups are again simple, following work of Elliott \cite{Ell} in the $AF$-algebra case. We can recapture this result as follows. If $J\subseteq A\rtimes_{\alpha,r}G$ is a nonzero norm closed ideal then $J$ is an $A$-bimodule so contains a group element by Proposition \ref{supp} (i). Thus $J=A\rtimes_{\alpha,r}G$, and simplicity of the crossed product follows.

\medskip

\noindent (ii) \quad If $A$ is a finitely generated unital simple C$^*$-algebra and $G$ is a countable discrete group acting on $A$ by outer $*$-automorphisms, then $A\rtimes_{\alpha,r}G$ is also finitely generated. If $\{g_1,g_2,\ldots\}$ is a listing of the elements of $G$ and $A$ is generated by $\{a_1,\ldots,a_k\}$, then the C$^*$-algebra $B$ generated by $\{a_1,\ldots,a_k,b\}$ with
$b=\sum_{i=1}^\infty g_i/2^i$ is intermediate with support $G$. By Theorem \ref{intermediate}, the only possibility is $B=A\rtimes_{\alpha,r}G$, showing that the crossed product is finitely generated.

\medskip

\noindent (iii) \quad If $A$ is a unital simple nuclear C$^*$-algebra and $G$ is a discrete amenable group, then all intermediate C$^*$-algebras are nuclear by Theorem \ref{intermediate}. This is in contrast to the fact that many nuclear C$^*$-algebras have non-nuclear subalgebras.

\medskip

\noindent (iv) \quad The assumption of an outer action in Theorem \ref{intermediate} seems essential. If $\mathbb{Z}$ acts trivially on a unital simple C$^*$-algebra $A$, then $A\rtimes_{\alpha,r}
\mathbb{Z}$ is $A\otimes_{\mathrm{min}}C(\mathbb{T})$ which can be identified with the C$^*$-algebra $C(\mathbb{T},A)$ of continuous $A$-valued functions on the circle $\mathbb{T}$.
This has a C$^*$-subalgebra $B$ consisting of those functions satisfying $f(1)=f(-1)$. If $Y$ is the figure of eight obtained 
from $\mathbb{T}$ by identifying the points $\pm 1$, then $B$ is isomorphic to
$A\otimes_{\mathrm{min}}C(Y)$. On the other hand, any nontrivial subgroup $H$ of $\mathbb{Z}$ is isomorphic to $\mathbb{Z}$, giving an isomorphism of $A\rtimes_{\alpha,r}H$ with
$A\rtimes_{\alpha,r}\mathbb{Z}$.
Thus $B$ cannot have the form $A\rtimes_{\alpha,r}H$, since this would lead to a topological isomorphism of $\mathbb{T}$ with $Y$.

\medskip

\noindent (v) \quad When $A$ is not simple the conclusion of Theorem
\ref{intermediate} may fail to hold. Let $A_1$ be a C$^*$-algebra that
admits a period 2 outer automorphism $\theta$ (for example,
C$^*_r(\mathbb{F}_2)$) and let $A=A_1\oplus A_1$ with a central projection
$z=(1,0)$. Define an outer automorphism $\alpha$ of $A$ by $\alpha(x,y)
=(\theta(x),\theta(y))$ for $x,y\in A_1$, which gives an outer action of
$\mathbb{Z}/2\mathbb{Z}$ on $A$. The intermediate C$^*$-algebra
$B:=A+Az\alpha$ does not have the form
$A\rtimes_{\alpha,r}\mathbb{Z}/2\mathbb{Z}$ for a subgroup $H\subseteq
\mathbb{Z}/2\mathbb{Z}$ since there are no nontrivial subgroups.
\end{rem}

\begin{rem}\label{conseq2}
(i) \quad Recall from \cite{PSS} that if we have an inclusion $B\subseteq A$ of C$^*$-algebras, then $B$ is said to norm $A$ when the following is satisfied: for any integer $k$ and
matrix $X\in \mathbb{M}_k(A)$,
\begin{equation}
\|X\|=\sup \|RXC\|
\end{equation}
where the supremum is taken over row matrices $R$ and column matrices $C$ of length $k$ with entries from $B$ and satisfying $\|C\|,\|R\|\leq 1$. Since $RXC\in A$, the
point of this definition is to reduce the calculation of norms in $\mathbb{M}_k(A)$ to that of norms in $A$, and the concept of norming has proved useful in showing
complete boundedness of certain types of bounded maps. We use this below.

We showed in \cite[Theorem 5.1]{CaSm2} that if a discrete group acts on a von Neumann algebra $M$ by properly outer automorphisms, then $M$ norms $M\rtimes_\alpha G$. An examination of the proof shows that the ``properly outer'' hypothesis is only necessary when $M$ has a type I$_{\mathrm{finite}}$ central summand, and so the result is true for general actions when $M$ has no finite dimensional representations. This is the case for the second dual of an infinite dimensional simple C$^*$-algebra $A$, and so $A^{**}\rtimes_{\alpha^{**}} G$ is normed by $A^{**}$ for an arbitrary action of $G$ on $A$. Then \cite[Lemma 2.3 (ii)]{PSS} shows that $A$ norms 
$A^{**}\rtimes_{\alpha^{**}} G$, so also norms any subspace of $A\rtimes_{\alpha,r} G$.

\medskip

\noindent (ii) \quad If $A$ is a unital C$^*$-algebra contained in an $A$-bimodule $X$, then a map $\theta:X\to X$ is called an $A$-bimodule map if $\theta|_A$ is a $*$-automorphism and 
\begin{equation}
\theta(a_1xa_2)=\theta(a_1)\theta(x)\theta(a_2),\ \ \ x\in X,\ a_1,a_2\in A.
\end{equation}
If $\theta$ is isometric and surjective, then a natural question is whether $\theta$ extends to a $*$-automorphism of C$^*(X)$, or perhaps of W$^*(X)$. Mercer \cite{MerC,Mer} was the first to consider such a problem, and results of this type are now called Mercer's theorem in the literature (see \cite{CPZ,CaSm,CaSm2} for later versions). 

In the case that $A$ is a unital simple C$^*$-algebra and $G$ is a discrete group acting by outer automorphisms, consider a norm closed $A$-bimodule $X$ satisfying $A\subseteq X\subseteq A\rtimes_{\alpha,r}G$ and an isometric surjective $A$-bimodule map $\theta$ on $X$.
Then C$^*(X)$ is $A\rtimes_{\alpha,r}H$ for a subgroup $H$ of $G$ by Theorem \ref{intermediate} and so is simple by Remark \ref{conseq1} (i). Thus C$^*(X)$ is also the C$^*$-envelope C$^*_{env}(X)$, and $\theta$ is completely isometric as in \cite[Lemma 6.2]{CaSm2} since $A$ norms $A\rtimes_{\alpha,r}G$ by (i). Then $\theta$ extends to a $*$-automorphism of C$^*(X)$; the details of the argument are in the proof of \cite[Theorem 6.6]{CaSm2} and we do not repeat them here.
\end{rem}

\section{Twisted crossed products}\label{tcp}

In this section, we extend the results of Section \ref{main} to the case of reduced twisted crossed products. Some of the arguments are essentially the same, so we will concentrate on
those points where significant differences arise. We thank Iain Raeburn for his helpful guidance through the literature of twisted crossed products.

Following the treatment in \cite{BK}, a twisted dynamical system is a quadruple $(A,G,\alpha,\sigma)$ where $A$ is a C$^*$-algebra (always assumed to be unital), $G$ is a discrete group,
and $\alpha:G\to {\mathrm{Aut}}(A)$ and $\sigma:G\times G\to \mathcal{U}(A)$ are maps satisfying
\begin{equation}\label{tw.1}
\alpha_s\alpha_t=\Ad\sigma(s,t)\circ \alpha_{st},\ \ \ s,t\in G,
\end{equation}
\begin{equation}\label{tw.2}
\alpha_r(\sigma(s,t))\sigma(r,st)=
\sigma(r,s)\sigma(rs,t),\ \ \ r,s,t\in G,
\end{equation}
and
\begin{equation}\label{tw.2.5}
\sigma(e,s)=\sigma(s,e)=1,\ \ \ s\in G.
\end{equation}
The map $\sigma$ is called a cocycle, and \eqref{tw.2} arises by applying \eqref{tw.1} to $\alpha_r\circ(\alpha_s\circ\alpha_t)=
(\alpha_r\circ\alpha_s)\circ\alpha_t$.

Assume that $A$ is faithfully represented on a Hilbert space $\Hil$. We define a representation $\pi_\alpha$ of $A$ on $\Hil\otimes \ell^2(G)$ and a map $\lambda_\sigma:G\to
\B(\Hil\otimes \ell^2(G))$ as follows:
\begin{equation}\label{tw.3}
\pi_\alpha(a)(\xi\otimes \delta_g)=\alpha_{g^{-1}}(a)\xi\otimes \delta_g,\ \ \ a\in A,\ \xi\in \Hil,\ g\in G,
\end{equation}
and
\begin{equation}\label{tw.4}
\lambda_\sigma(s)(\xi \otimes \delta_t)=\sigma(t^{-1}s^{-1},s)\xi\otimes\delta_{st},
\ \ \ \xi\in\Hil,\ s,t\in G.
\end{equation}
As noted in \cite{BK}, these satisfy
\begin{equation}\label{tw.5}
\pi_\alpha(\alpha_s(a))=\lambda_\sigma(s)\pi_\alpha(a)
\lambda_\sigma(s)^*,\ \ \ a\in A,\ s\in G,
\end{equation}
and
\begin{equation}\label{tw.6}
\lambda_\sigma(s)\lambda_\sigma(t)=\pi_\alpha(\sigma(s,t))\lambda_\sigma(st),
\ \ \ s,t\in G.
\end{equation}
The {\it{reduced twisted crossed product}} $A\rtimes^\sigma_{\alpha,r}G\subseteq \B(\Hil\otimes \ell^2(G))$ is defined to be the C$^*$-algebra generated by the operators $\pi_\alpha(a)$ and
$\lambda_\sigma(g)$ for $a\in A$ and $g\in G$. As shown in \cite{Q}, it is independent of the choice of the faithful representation of $A$. When the cocycle is trivial ($\sigma(s,t)=1$),
this is just the usual reduced C$^*$-crossed product.
If we replace $A$ by a von Neumann algebra $M$, then the $w^*$-closure of the algebra generated by these operators is the twisted von Neumann algebra, which we denote by
$M\rtimes^\sigma_{\alpha,vn}G$ to avoid confusion with the full twisted crossed product \cite{PR}, usually written as $A\rtimes^\sigma_\alpha G$.

As was mentioned in the introduction,  twisted crossed products arise when expressing $A\rtimes_{\alpha,r}G$ as $(A\rtimes_{\alpha,r}N)\rtimes_{\beta,r}^\sigma G/N$, where $N$ is a normal subgroup of $G$. We digress briefly to show how this is achieved (see \cite{Bed}).
Specify a cross section $\phi:G/N\to G$, and define $\beta_s$ to be $\Ad \phi(s)$ for $s\in G/N$, noting that $\beta_s$ maps $A\rtimes_{\alpha,r}N$ to itself since $N$ is a normal subgroup. The cocycle $\sigma$ is then given by
\begin{equation}\label{tw.6.5}
\sigma(s,t):=\phi(s)\phi(t)\phi(st)^{-1}\in N\subseteq \mathcal{U}(A\rtimes_{\alpha,r}N),\ \ \ \ \ s,t\in G/N,
\end{equation}
and $\sigma$ measures the extent to which $\phi$ fails to be a group homomorphism.

Let $(A,G,\alpha,\sigma)$ and $(A,G,\beta,\mu)$ be two twisted dynamical systems. As in \cite{PR}, we say that these are {\it{exterior equivalent}} if there is a map
$v:G\to{\mathcal{U}}(A)$ so that
\begin{equation}\label{tw.7}
\beta_s=\Ad v_s\circ \alpha_s,\ \ \ s\in G,
\end{equation}
and
\begin{equation}\label{tw.8}
\mu(s,t)=v_s\alpha_s(v_t)\sigma(s,t)v_{st}^*,\ \ \ s,t\in G.
\end{equation}
Exterior equivalence was related to isomorphism of the full twisted crossed products in \cite[Lemma 3.3]{PR}. Our next result is the analogous statement for the reduced case.

\begin{lem}\label{exterior}
Let $A$ be a unital C$^*$-algebra faithfully represented on a Hilbert space $\Hil$ and let $G$ be a discrete group. Let $(A,G,\alpha,\sigma)$ and $(A,G,\beta,\mu)$ be twisted dynamical
systems, and suppose that they are exterior equivalent by a map $v:G\to \mathcal{U}(A)$. Then there exists a unitary operator $V$ on $\Hil\otimes \ell^2(G)$ so that $\Ad V$is a spatial
isomorphism of $A\rtimes^\sigma_{\alpha,r}G$ onto $A\rtimes^\mu_{\beta,r}G$ and maps $\pi_\alpha(a)\lambda_\sigma(g)$ to $\pi_\beta(a)\pi_\beta(v_g^*)\lambda_\mu(g)$ for $a\in A$ and
$g\in G$.

Moreover, if $A^{**}$ is faithfully normally represented on $\Hil$, then $\Ad V$ is a spatial isomorphism of $A^{**}\rtimes^\sigma_{\alpha^{**},vn}G$ onto
$A^{**}\rtimes^\mu_{\beta^{**},vn}G$.
\end{lem}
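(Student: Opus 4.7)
The plan is to construct $V$ explicitly as the diagonal unitary on $\Hil \otimes \ell^2(G)$ determined by the $v_g$'s from the exterior equivalence. Specifically, I would set
$$V(\xi \otimes \delta_g) := v_{g^{-1}}\,\xi \otimes \delta_g, \qquad \xi \in \Hil,\ g \in G.$$
Since each $v_{g^{-1}} \in \mathcal{U}(A)$, $V$ is manifestly a unitary with adjoint $V^*(\xi \otimes \delta_g) = v_{g^{-1}}^*\,\xi \otimes \delta_g$, and as an operator it lies in $A \otimes \ell^\infty(G) \subseteq \B(\Hil \otimes \ell^2(G))$ (in the strong operator sense). This last containment will supply the ``moreover'' assertion essentially for free.

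With $V$ in hand, I would verify the two intertwining relations on the generators. For $a \in A$ and $g \in G$, a direct computation using $\beta_r = \Ad v_r \circ \alpha_r$ yields
$$V\pi_\alpha(a)V^*(\xi \otimes \delta_g) = v_{g^{-1}}\alpha_{g^{-1}}(a)v_{g^{-1}}^*\,\xi \otimes \delta_g = \beta_{g^{-1}}(a)\,\xi \otimes \delta_g = \pi_\beta(a)(\xi \otimes \delta_g),$$
so $V\pi_\alpha(a)V^* = \pi_\beta(a)$. Next, for $s,t \in G$ and $r := t^{-1}s^{-1}$, unwinding \eqref{tw.3}, \eqref{tw.4} gives
$$V\lambda_\sigma(s)V^*(\xi \otimes \delta_t) = v_r\,\sigma(r,s)\,v_{t^{-1}}^*\,\xi \otimes \delta_{st}, \qquad \pi_\beta(v_s^*)\lambda_\mu(s)(\xi \otimes \delta_t) = \beta_r(v_s^*)\,\mu(r,s)\,\xi \otimes \delta_{st}.$$
Substituting the cocycle formula \eqref{tw.8}, $\mu(r,s) = v_r\alpha_r(v_s)\sigma(r,s)v_{t^{-1}}^*$, and expanding $\beta_r(v_s^*) = v_r\alpha_r(v_s^*)v_r^*$, the right hand side telescopes to $v_r\sigma(r,s)v_{t^{-1}}^*$, matching the left. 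Hence $V\lambda_\sigma(s)V^* = \pi_\beta(v_s^*)\lambda_\mu(s)$.

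Combining these two identities, $\Ad V$ sends the generator $\pi_\alpha(a)\lambda_\sigma(g)$ of $A\rtimes^\sigma_{\alpha,r}G$ to $\pi_\beta(a)\pi_\beta(v_g^*)\lambda_\mu(g)$. Because each $v_g \in A$, the element $\pi_\beta(v_g^*)$ lies in $\pi_\beta(A)$, so the image of $\Ad V$ on the first crossed product is precisely the C$^*$-algebra generated by $\pi_\beta(A)$ and $\{\lambda_\mu(g) : g \in G\}$, namely $A\rtimes^\mu_{\beta,r}G$. This establishes the spatial isomorphism with the stated formula. For the moreover statement, when $A^{**}$ is faithfully normally represented on $\Hil$ the same definition places $V$ in $A^{**}\vntensor \ell^\infty(G)$, so $\Ad V$ is $w^*$-bicontinuous and automatically carries the $w^*$-closure $A^{**}\rtimes^\sigma_{\alpha^{**},vn}G$ onto $A^{**}\rtimes^\mu_{\beta^{**},vn}G$.

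The only substantive obstacle is the algebraic verification in the second intertwining relation; it reduces to a single careful application of the cocycle identity \eqref{tw.8} at the argument $r = t^{-1}s^{-1}$ together with the definition of $\beta_r$, after which the formulas telescope cleanly. Everything else—unitarity of $V$, the form of the image algebra, and the von Neumann extension—is immediate from the explicit diagonal form of $V$.
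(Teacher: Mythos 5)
Your proposal is correct and follows essentially the same route as the paper: the same diagonal unitary $V(\xi\otimes\delta_t)=v_{t^{-1}}\xi\otimes\delta_t$, the same generator computations, and the same application of the cocycle identity \eqref{tw.8} at $(t^{-1}s^{-1},s)$ (using $t^{-1}s^{-1}\cdot s=t^{-1}$) to make the two sides telescope to $v_{t^{-1}s^{-1}}\sigma(t^{-1}s^{-1},s)v_{t^{-1}}^*$. The handling of the von Neumann case via $w^*$-continuity of $\Ad V$ likewise matches the paper's treatment.
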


\begin{proof}
As noted above, $A\rtimes^\sigma_{\alpha,r}G$ is independent of the faithful representation of $A$, so we may assume at the outset that we have faithfully normally represented $A^{**}$
on a Hilbert space $\Hil$.

Define a unitary $V$ on $\Hil\otimes\ell^2(G)$ by
\begin{equation}\label{ext.1}
V(\xi\otimes \delta_t)=v_{t^{-1}}\xi\otimes\delta_t,\ \ \ \xi\in\Hil,\ t\in G.
\end{equation}
For $a\in A$, $\xi\in\Hil$ and $t\in G$,
\begin{align}
V\pi_\alpha(a)V^*(\xi\otimes \delta_t)&=V\pi_\alpha(a)(v_{t^{-1}}^*\xi\otimes\delta_t)
=V(\alpha_{t^{-1}}(a)v_{t^{-1}}^*\xi\otimes\delta_t)\notag\\
&=v_{t^{-1}}\alpha_{t^{-1}}(a)v_{t^{-1}}^*\xi\otimes\delta_t
=\beta_{t^{-1}}(a)\xi\otimes\delta_t\notag\\
&=\pi_\beta(a)(\xi\otimes\delta_t),\label{ext.2}
\end{align}
where we have used \eqref{tw.7} in the penultimate equality.
Since $\xi\in \Hil$ and $t\in G$ were arbitrary, we see that $\Ad V(\pi_\alpha(a))=\pi_\beta(a)$.

The change of variables $(s,t)\mapsto (t^{-1}g^{-1},g)$ in \eqref{tw.8} gives
\begin{equation}\label{ext.3}
\mu(t^{-1}g^{-1},g)=v_{t^{-1}g^{-1}}\alpha_{t^{-1}g^{-1}}(v_g)
\sigma(t^{-1}g^{-1},g)v_{t^{-1}}^*
\end{equation}
so a rearrangement of the terms in \eqref{ext.3} leads to
\begin{equation}\label{ext.4}
\sigma(t^{-1}g^{-1},g)v_{t^{-1}}^*\mu(t^{-1}g^{-1},g)^*=
\alpha_{t^{-1}g^{-1}}(v_g^*)v_{t^{-1}g^{-1}}^*.
\end{equation}
Now, for $\xi\in\Hil$ and $g,t\in G$,
\begin{align}
\Ad V(\lambda_\sigma(g))(\xi\otimes\delta_t)&=V\lambda_\sigma(g)(v_{t^{-1}}^*\xi
\otimes\delta_t)\notag\\
&=V(\sigma(t^{-1}g^{-1},g)v_{t^{-1}}^*\xi\otimes\delta_{gt})
\ \ \ \text{ (from \eqref{tw.4})}\notag\\
&=v_{t^{-1}g^{-1}}\sigma(t^{-1}g^{-1},g)v_{t^{-1}}^*\xi\otimes\delta_{gt}
 \notag\\
&=v_{t^{-1}g^{-1}}\sigma(t^{-1}g^{-1},g)v_{t^{-1}}^*
\mu(t^{-1}g^{-1},g)^*\mu(t^{-1}g^{-1},g)\xi\otimes\delta_{gt}\notag\\
&=v_{t^{-1}g^{-1}}\alpha_{t^{-1}g^{-1}}(v_g^*)v_{t^{-1}g^{-1}}^*
\mu(t^{-1}g^{-1},g)\xi\otimes\delta_{gt}\ \ \ \text{ (from \eqref{ext.4})}\notag\\
&= \beta_{t^{-1}g^{-1}}(v_g^*)\mu(t^{-1}g^{-1},g)\xi\otimes \delta_{gt} \ \ \ \text{ (from \eqref{tw.7})}.\label{ext.5}
\end{align}
On the other hand,
\begin{align}
\pi_\beta(v_g^*)\lambda_\mu(g)(\xi\otimes \delta_t)&=
\pi_\beta(v_g^*)(\mu(t^{-1}g^{-1},g)\xi\otimes\delta_{gt})\notag\\
&=\beta_{t^{-1}g^{-1}}(v_g^*)\mu(t^{-1}g^{-1},g)\xi\otimes\delta_{gt},\label{ext.6}
\end{align}
for $\xi\in\Hil$, $g,t\in G$, and so \eqref{ext.5} and \eqref{ext.6} establish that
\begin{equation}\label{ext.7}
\Ad V(\lambda_\sigma(g))=\pi_\beta(v_g^*)\lambda_\mu(g),
\ \ \ g\in G.
\end{equation}
Thus $\Ad V$ gives a spatial isomorphism between the two reduced twisted crossed products with the stated properties in both the C$^*$-algebra and von Neumann algebra cases.
\end{proof}

In the next result, we show that a reduced twisted crossed product can be untwisted at the von Neumann algebra level by tensoring with a copy of $\B(\ell^2(G))$. This is inspired by a
similar result of Sutherland \cite[Theorem 5.1]{Sut} who proved this for factors and scalar valued cocycles. Our proof follows that of \cite[Theorem 3.4]{PR} where the untwisting is
accomplished for full twisted crossed products by tensoring with the C$^*$-algebra of compact operators on $\ell^2(G)$ when considering only discrete groups ($L^2(G)$ for general locally
compact groups in \cite{PR}).

We denote by $\lam_g$  the unitary operator on $\ell^2(G)$  defined by
$\lam_g(\delta_t)=
\delta_{gt}$, for $g,t\in G$. 

\begin{lem}\label{untwist} Let $A$ be a unital C$^*$-algebra with $A^{**}$ faithfully normally represented on a Hilbert space $\Hil$, let $G$ be a discrete group, and let
$(A,G,\alpha,\sigma)$ be a twisted dynamical system. There exist an action $\beta$ of $G$ on $A^{**}\vntensor \B(\ell^2(G))$, a map $v:G\to \mathcal{U}(A^{**}\vntensor \B(\ell^2(G)))$,
and a surjective spatial isomorphism
\begin{equation}\label{unt.1}
\phi:(A^{**}\vntensor \B(\ell^2(G)))\rtimes^{\sigma\otimes 1}_{\alpha^{**}\otimes id, vn}G \to
(A^{**}\vntensor \B(\ell^2(G)))\rtimes_{\beta,vn}G
\end{equation}
satisfying
\begin{equation}\label{unt.2}
\phi(\pi_{\alpha^{**}\otimes id}(x))=\pi_\beta(x),
\ \ \ x\in A^{**}\vntensor \B(\ell^2(G)),
\end{equation}
and
\begin{equation}\label{unt.3}
\phi(\lambda_{\sigma\otimes 1}(g))=\pi_\beta(v_g^*)(1\otimes \lam_g),\ \ \ g\in G.
\end{equation}
\end{lem}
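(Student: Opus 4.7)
The plan is to exhibit $(A^{**}\vntensor\B(\ell^2(G)),G,\alpha^{**}\otimes id,\sigma\otimes 1)$ as exterior equivalent to an untwisted system $(A^{**}\vntensor\B(\ell^2(G)),G,\beta,1)$, and then invoke the von Neumann algebra statement of Lemma \ref{exterior} to promote that exterior equivalence to the desired spatial isomorphism $\phi$. This is the Packer--Raeburn strategy of \cite[Theorem 3.4]{PR} transplanted from the full-crossed-product setting to the reduced von Neumann algebra setting.

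Concretely, I define $v_g\in\B(\Hil\otimes\ell^2(G))$ by the weighted translation formula
\begin{equation*}
v_g(\xi\otimes\delta_t)=\sigma(g,t)^*\xi\otimes\delta_{gt},\qquad \xi\in\Hil,\ t\in G.
\end{equation*}
Writing $v_g=\sum_{t\in G}\sigma(g,t)^*\otimes e_{gt,t}$ in the strong operator topology (with $e_{s,t}$ the matrix units on $\ell^2(G)$) shows $v_g\in A^{**}\vntensor\B(\ell^2(G))$; unitarity of each $\sigma(g,t)$ makes $v_g$ unitary, and the normalization $\sigma(e,\cdot)=1$ gives $v_e=1$. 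A short computation on $\xi\otimes\delta_t$ then yields
\begin{equation*}
v_g\,\bigl((\alpha^{**}_g\otimes id)(v_h)\bigr)(\sigma(g,h)\otimes 1)(\xi\otimes\delta_t)=\sigma(g,ht)^*\alpha_g(\sigma(h,t))^*\sigma(g,h)\,\xi\otimes\delta_{ght},
\end{equation*}
while $v_{gh}(\xi\otimes\delta_t)=\sigma(gh,t)^*\xi\otimes\delta_{ght}$. Equality of these two expressions is precisely the $\sigma$-cocycle identity \eqref{tw.2} with $(r,s,t)\mapsto(g,h,t)$, namely $\alpha_g(\sigma(h,t))\sigma(g,ht)=\sigma(g,h)\sigma(gh,t)$.

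With this in hand, set $\beta_g:=\Ad v_g\circ(\alpha^{**}_g\otimes id)$. The relation $v_{gh}=v_g(\alpha^{**}_g\otimes id)(v_h)(\sigma(g,h)\otimes 1)$ just verified forces $\beta_g\beta_h=\beta_{gh}$, using \eqref{tw.1} to absorb the $\sigma(g,h)$ that appears from composing $\alpha^{**}_g\otimes id$ with $\alpha^{**}_h\otimes id$. Thus $\beta$ is a bona fide $G$-action on $A^{**}\vntensor\B(\ell^2(G))$, and the map $v$ witnesses exterior equivalence in the sense of \eqref{tw.7}--\eqref{tw.8} between the twisted system $(\alpha^{**}\otimes id,\sigma\otimes 1)$ and the untwisted system $(\beta,1)$.

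Finally, invoking the ``moreover'' clause of Lemma \ref{exterior} with the C$^*$-algebra $A$ replaced by the von Neumann algebra $A^{**}\vntensor\B(\ell^2(G))$ faithfully normally represented on $\Hil\otimes\ell^2(G)$ produces a unitary $V$ on $\Hil\otimes\ell^2(G)\otimes\ell^2(G)$ such that $\phi:=\Ad V$ is a spatial isomorphism of $(A^{**}\vntensor\B(\ell^2(G)))\rtimes^{\sigma\otimes 1}_{\alpha^{**}\otimes id,vn}G$ onto $(A^{**}\vntensor\B(\ell^2(G)))\rtimes_{\beta,vn}G$, with $\phi(\pi_{\alpha^{**}\otimes id}(x))=\pi_\beta(x)$ and $\phi(\lambda_{\sigma\otimes 1}(g))=\pi_\beta(v_g^*)\lambda_1(g)$. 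Since the trivial cocycle reduces \eqref{tw.4} to $\lambda_1(g)(\xi\otimes\delta_t)=\xi\otimes\delta_{gt}$, we have $\lambda_1(g)=1\otimes\lam_g$, giving exactly \eqref{unt.2} and \eqref{unt.3}. The main obstacle in this argument is identifying the correct formula for $v_g$ so that the $\sigma$-cocycle identity exactly discharges the exterior-equivalence relation for $v$; once the weighted-translation choice $v_g=\sum_t\sigma(g,t)^*\otimes e_{gt,t}$ is in hand, the rest is unpacking definitions and quoting Lemma \ref{exterior}.
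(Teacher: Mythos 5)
Your proof is correct and follows essentially the same route as the paper: your $v_g$ is exactly the paper's $(1\otimes\lam_g)M_g^*$, you define $\beta_g=\Ad v_g\circ(\alpha_g^{**}\otimes\mathrm{id})$ as the paper does, and you conclude by the same appeal to the von Neumann algebra clause of Lemma \ref{exterior}. The only cosmetic difference is that you deduce $\beta_g\beta_h=\beta_{gh}$ from the already-verified relation $v_{gh}=v_g(\alpha_g^{**}\otimes\mathrm{id})(v_h)(\sigma(g,h)\otimes 1)$ together with \eqref{tw.1}, whereas the paper checks it directly on the generators $x\otimes E_{p,q}$; both are valid.
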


\begin{proof}
From the discussion after equation \eqref{rcp.3}, we will assume that the automorphism $\alpha_g$ is spatially implemented by a unitary $a_g\in \B(\Hil)$ for $g\in G$.

We follow the construction in the proof of \cite[Theorem 3.4]{PR}. For $s\in G$, we define a unitary $M_s$ on $\Hil\otimes \ell^2(G)$ by
\begin{equation}\label{unt.4}
M_s(\xi\otimes\delta_t)=\sigma(s,t)\xi\otimes\delta_t,
\ \ \ \xi\in\Hil,\ t\in G.
\end{equation}
If $z\in (A^{**})'$, then $z$ commutes with $\sigma(s,t)$ and so $z\otimes 1$ commutes with $M_s$. Thus $M_s\in ((A^{**})'\otimes 1)'=A^{**}\vntensor \B(\ell^2(G))$, and it follows that
\begin{equation}\label{unt.5}
\beta_s:=\Ad[(1\otimes \lam_s)M_s^*]\circ(\alpha_s^{**}\otimes \mathrm{id})
\end{equation}
is an automorphism of $A^{**}\vntensor \B(\ell^2(G))$ for each $s\in G$. We now check that $\beta_s\beta_t=\beta_{st}$ for $s,t\in G$, and it suffices to do this on generators $x\otimes
E_{p,q}$, where $x\in A^{**}$ and $E_{p,q}$ is the rank one partial isometry that maps $\delta_q$ to $\delta_p$. We denote the Kronecker delta function by $\Delta_{p,q}$. Then, using
\eqref{unt.5}, we compute that
\begin{align}
\beta_s(x\otimes E_{p,q})(\xi\otimes\delta_t)&=(1\otimes\lam_s)M_s^*(\alpha_s^{**}(x)\otimes E_{p,q})M_s (1\otimes \lam_{s^{-1}})(\xi\otimes\delta_t)\notag\\
&=(1\otimes\lam_s)M_s^*(\alpha_s^{**}(x)\otimes E_{p,q})M_s(\xi\otimes\delta_{s^{-1}t})\notag\\
&=(1\otimes \lam_s)M_s^*(\alpha_s^{**}(x)\otimes E_{p,q})(\sigma(s,s^{-1}t)\xi\otimes\delta_{s^{-1}t})\notag\\
&=\Delta_{q,s^{-1}t}(1\otimes\lam_s)M_s^*
(\alpha_s^{**}(x)\sigma(s,s^{-1}t)\xi\otimes\delta_p)\notag\\
&=\Delta_{sq,t}(1\otimes\lam_s)(\sigma(s,p)^*\alpha_s^{**}(x)
\sigma(s,s^{-1}t)\xi\otimes\delta_p)\notag\\
&=\Delta_{sq,t}\sigma(s,p)^*\alpha_s^{**}(x)\sigma(s,q)\xi\otimes\delta_{sp}
\notag\\
&=\sigma(s,p)^*\alpha_s^{**}(x)\sigma(s,q)\xi\otimes E_{sp,sq}(\delta_t),\label{unt.6}
\end{align}
for $x\in A^{**}$ and $p,q,s,t\in G$.
Thus
\begin{equation}\label{unt.7}
\beta_s(x\otimes E_{p,q})=\sigma(s,p)^*\alpha_s^{**}(x)\sigma(s,q)\otimes E_{sp,sq}.
\end{equation}
It follows that, for $x\in A^{**}$ and $p,q,s,t\in G$,
\begin{align}
\beta_s\beta_t(x\otimes E_{p,q})&=\beta_s(\sigma(t,p)^*\alpha_t^{**}(x)\sigma(t,q)\otimes E_{tp,tq})\notag\\
&=\sigma(s,tp)^*\alpha_s^{**}(\sigma(t,p)^*)\alpha_{s}^{**}(\alpha_t^{**}(x))\alpha_s^{**}(\sigma(t,q))\sigma(s,tq)
\otimes E_{stp,stq}\notag\\
&=\sigma(s,tp)^*\alpha_s^{**}(\sigma(t,p)^*)\sigma(s,t)\alpha_{st}^{**}(x)\sigma(s,t)^*\alpha_s^{**}(\sigma(t,q))\sigma(s,tq)\otimes E_{stp,stq}\notag\\
&=\sigma(st,p)^*\alpha_{st}^{**}(x)\sigma(st,q)\otimes E_{stp,stq}\notag\\
&=\beta_{st}(x\otimes E_{p,q}),\label{unt.8}
\end{align}
where we have used \eqref{tw.1} in the third equality, \eqref{tw.2} twice in the fourth, and \eqref{unt.7} in the last. Thus 
$\beta_s\beta_t=\beta_{st}$ and so $\beta$ is an action of $G$ on $A^{**}\vntensor \B(\ell^2(G))$.

We now show that the two dynamical systems $(A^{**}\vntensor \B(\ell^2(G)),G,\alpha^{**}\otimes {\mathrm{id}},\sigma\otimes 1)$ and  $(A^{**}\vntensor \B(\ell^2(G)),G,\beta,1)$ are exterior equivalent. Define a map
$v:G\to \mathcal{U}(A^{**}\vntensor \B(\ell^2(G)))$ by
\begin{equation}\label{unt.9}
v_s=(1\otimes\lam_s)M_s^*,\ \ \ s\in G.
\end{equation}
Both operators in this product commute with $(A^{**})'\otimes 1$, so $v_s\in A^{**}\vntensor \B(\ell^2(G))$. By construction, $\beta_s=\Ad v_s\circ (\alpha_s\otimes {\mathrm{id}})$ so
\eqref{tw.7} is satisfied. Recalling that $\alpha_s^{**}$ is spatially implemented by a unitary $a_s\in \B(\Hil)$ for $s\in G$, we see that
\begin{align}
(\alpha_s^{**}\otimes{\mathrm{id}})(v_t)(\xi \otimes \delta_r)&=(a_s\otimes 1)v_t(a_s^*\xi\otimes\delta_r)\notag\\
&=(a_s\otimes 1)(1\otimes \lam_t)(\sigma(t,r)^*a_s^*\xi\otimes\delta_r)\notag\\
&=a_s\sigma(t,r)^*a_s^*\xi\otimes\delta_{tr}
=\alpha_s^{**}(\sigma(t,r)^*)\xi\otimes\delta_{tr},\label{unt.10}
\end{align}
for $\xi\in \Hil$ and $r,s,t\in G$. Thus,
for $\xi\in \Hil$ and $r,s,t\in G$,
\begin{align}
v_s(\alpha_s^{**}\otimes{\mathrm{id}})(v_t)(\sigma\otimes 1)(s,t)v_{st}^*&(\xi\otimes \delta_r)\notag\\
&=v_s(\alpha_s^{**}\otimes \mathrm{id}(v_t)(\sigma \otimes 1)(s,t)(\sigma(st,t^{-1}s^{-1}r)\xi\otimes\delta_{t^{-1}s^{-1}r})\notag\\
&=v_s(\alpha_s^{**}\otimes\mathrm{id}(v_t)
(\sigma(s,t)\sigma(st,t^{-1}s^{-1}r)\xi\otimes
\delta_{t^{-1}s^{-1}r})\notag\\
&=v_s(\alpha_s^{**}(\sigma(t,t^{-1}s^{-1}r)^*)\sigma(s,t)
\sigma(st,t^{-1}s^{-1}r)\xi\otimes \delta_{s^{-1}r})\notag\\
&=\sigma(s,s^{-1}r)^*\alpha_s^{**}(\sigma(t,t^{-1}s^{-1}r)^*)
\sigma(s,t)\sigma(st,t^{-1}s^{-1}r)\xi\otimes\delta_r\notag\\
&=\xi\otimes \delta_r,\label{unt.11}
\end{align}
using \eqref{tw.2} for the last equality. Thus \eqref{tw.8} is satisfied with $\mu(s,t)=1$, and we have established exterior equivalence of the two dynamical systems. The result now
follows from Lemma \ref{exterior}.
\end{proof}

\begin{pro}\label{twmod}
Let $A$ be a unital simple C$^*$-algebra, let $G$ be a discrete group, and let $(A,G,\alpha,\sigma)$ be a twisted dynamical system, where $\alpha_g$ is outer for $g\ne e$.
If $X\subseteq A\rtimes_{\alpha,r}^\sigma G$ is a norm closed $A$-bimodule with support $S$ and
\begin{equation}\label{twmod.1}
X_S:=\ovl{\mathrm{span}}^{\|\cdot\|}\{A\lambda_\sigma(g):g\in S\},
\end{equation}
then $X_S\subseteq X$.
\end{pro}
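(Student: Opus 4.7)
The plan is to mirror the proof of Proposition \ref{supp}(i), with only the averaging map $\Psi$ requiring modification to account for the twist. Fix $g_0 \in S$ and, invoking simplicity of $A$, choose $x_0 \in X$ whose Fourier coefficient at $g_0$ equals $1$. Given $\varepsilon \in (0, 1)$, consider the set $\mathcal{P}$ of pairs $(x, y)$ with $x \in X$ having $g_0$-coefficient $1$, with $y = \sum_{g \in F} y_g \lambda_\sigma(g) \in \mathrm{span}\{A \lambda_\sigma(g) : g \in G\}$ a finite sum, and $\|x - y\| < \varepsilon$; choose such a pair with $y$ of minimal length $|F|$. If it can be shown that $y = y_{g_0}\lambda_\sigma(g_0)$ necessarily, then iterating over $\varepsilon = 2^{-n}$ produces $x_n \in X$ and $y_n \in A$ with $\|1 - y_n\| < 2^{-n}$ and $\|x_n - y_n \lambda_\sigma(g_0)\| < 2^{-n}$, so that $\lambda_\sigma(g_0) \in X$ and hence $A\lambda_\sigma(g_0) \subseteq X$; since $g_0 \in S$ is arbitrary, $X_S \subseteq X$. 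The task thus reduces to excluding a nonzero coefficient $y_{g_1}$ with $g_1 \neq g_0$.

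To that end, consider the map
\begin{equation*}
\Psi(z) = \sum_{i=1}^k \pi_\alpha(a_i)^* \, z \, \pi_\alpha(\alpha_{g_0}^{-1}(a_i)), \qquad z \in A \rtimes_{\alpha, r}^\sigma G,
\end{equation*}
where $\alpha_{g_0}^{-1}$ denotes the inverse of $\alpha_{g_0}$ as an automorphism of $A$, and $a_1, \ldots, a_k \in A$ satisfy $\sum a_i^* a_i = 1$. Because both $a_i^*$ and $\alpha_{g_0}^{-1}(a_i)$ lie in $\pi_\alpha(A)$, the map $\Psi$ preserves the $A$-bimodule $X$, and the identity $\sum \alpha_{g_0}^{-1}(a_i)^* \alpha_{g_0}^{-1}(a_i) = \alpha_{g_0}^{-1}(1) = 1$ gives $\|\Psi\|_{\mathrm{cb}} \leq 1$. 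Using \eqref{tw.5} to commute $\lambda_\sigma(g)$ past $\pi_\alpha(\alpha_{g_0}^{-1}(a_i))$, one computes
\begin{equation*}
\Psi(a \lambda_\sigma(g)) = \left( \sum_{i=1}^k a_i^* a \, \alpha_g(\alpha_{g_0}^{-1}(a_i)) \right) \lambda_\sigma(g),
\end{equation*}
so that the support of $\Psi(z)$ is contained in that of $z$. For $g = g_0$ the coefficient collapses to $\sum a_i^* a \, a_i$, which equals $1$ when $a = 1$ for any choice of $a_i$'s; in particular $\Psi$ fixes the $g_0$-coefficient of $x$.

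The twist enters only at $g = g_1$. The cocycle relation \eqref{tw.1} applied with $s = g_1 g_0^{-1}$ and $t = g_0$ gives $\alpha_{g_1} \circ \alpha_{g_0}^{-1} = \Ad \sigma(g_1 g_0^{-1}, g_0)^* \circ \alpha_{g_1 g_0^{-1}}$, so, writing $\omega := \sigma(g_1 g_0^{-1}, g_0) \in \mathcal{U}(A)$,
\begin{equation*}
(\Psi(y))_{g_1} = \left( \sum_{i=1}^k a_i^* \, y_{g_1} \omega^* \, \alpha_{g_1 g_0^{-1}}(a_i) \right) \omega.
\end{equation*}
Since $g_1 \neq g_0$, the automorphism $\alpha_{g_1 g_0^{-1}}$ is outer, and Proposition \ref{average} applied to the element $y_{g_1}\omega^* \in A$ and the automorphism $\alpha_{g_1 g_0^{-1}}$ produces $a_1, \ldots, a_k$ with $\sum a_i^* a_i = 1$ making $\|(\Psi(y))_{g_1}\|$ arbitrarily small. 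Choosing this bound to be $\delta > 0$ with $\|x - y\| + \delta < \varepsilon$, the pair $(\Psi(x), \Psi(y) - (\Psi(y))_{g_1}\lambda_\sigma(g_1))$ lies in $\mathcal{P}$ and has a strictly shorter second entry, contradicting minimality. The principal bookkeeping hurdle is tracking the cocycle unitaries that arise when commuting $\lambda_\sigma$'s past scalars; the key observation is that these unitaries lie in $A$ and can be absorbed into the element supplied to Proposition \ref{average}, leaving the outer automorphism $\alpha_{g_1 g_0^{-1}}$ at which averaging takes place unchanged.
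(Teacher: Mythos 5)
Your proposal is correct and follows essentially the same route as the paper: the same averaging map $\Psi(z)=\sum_i \pi_\alpha(a_i)^*z\,\pi_\alpha(\alpha_{g_0}^{-1}(a_i))$, the same identification of the $g_1$-coefficient as $\bigl(\sum_i a_i^*y_{g_1}\sigma(g_1g_0^{-1},g_0)^*\alpha_{g_1g_0^{-1}}(a_i)\bigr)\sigma(g_1g_0^{-1},g_0)$, and the same application of Proposition \ref{average} to $y_{g_1}\sigma(g_1g_0^{-1},g_0)^*\in A$ with the outer automorphism $\alpha_{g_1g_0^{-1}}$. The only cosmetic difference is that you derive the cocycle unitary via \eqref{tw.1} and \eqref{tw.5} where the paper manipulates the $\lambda_\sigma$'s directly via \eqref{tw.6}; both land on the paper's equation \eqref{twmod.2}.
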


\begin{proof}
This is identical to the proof of Proposition \ref{supp} except for one small change. Using the same notation, and referring to equation \eqref{supp.5}, the $\lambda_\sigma(g_1)$-coefficient
of $\Psi(y)$ is now
\begin{align}
&\sum_{i=1}^k a_i^*y_{g_1}\lambda_\sigma(g_1)\lambda_\sigma(g_0)^*a_i\lambda_\sigma
(g_0)\lambda_\sigma(g_1)^*\ \ \   \ \ \ (\text{from \eqref{tw.6}})\notag\\
=&\sum_{i=1}^k a_i^*y_{g_1}\sigma(g_1g_0^{-1},g_0)^*
\lambda_\sigma(g_1g_0^{-1})a_i\lambda_\sigma(g_1g_0^{-1})^*
\sigma(g_1g_0^{-1},g_0)\ \ \ \ \ (\text{from \eqref{tw.6}})\notag\\
=&\sum_{i=1}^k a_i^*y_{g_1}\sigma(g_1g_0^{-1},g_0)^*\alpha_{g_1g_0^{-1}}(a_i)
\sigma(g_1g_0^{-1},g_0).\label{twmod.2}
\end{align}
Thus the $a_i$'s are chosen so that \eqref{supp.4} is replaced by
\begin{equation}\label{twmod.3}
\left\|\sum_{i=1}^k a_i^* y_{g_1}\sigma(g_1g_0^{-1},g_0)^*\alpha_{g_1g_0^{-1}}(a_i)\right\|<\delta,
\end{equation}
which is possible by Proposition \ref{average} since $\alpha_{g_1g_0^{-1}}$ is outer and $y_{g_1}\sigma(g_1g_0^{-1},g_0)^*\in A$.
\end{proof}

We now come to the main result of this section, which characterizes the intermediate C$^*$-algebras for reduced twisted crossed products. This extends the analogous result in Theorem \ref{intermediate} for the untwisted case. We note that the conditional expectation of $A\rtimes^\sigma_{\alpha,r}G$ onto $A$ was constructed in \cite[Theorem 2.2]{Bed}.

\begin{thm}\label{maintw}
Let $A$ be a unital simple C$^*$-algebra, let $G$ be a discrete group, and let $(A,G,\alpha,\sigma)$ be a twisted dynamical system where $\alpha_g$ is outer for $g\ne e$. There is a
bijective correspondence between subgroups $H$ of $G$ and C$^*$-algebras $B$ satisfying $A\subseteq B\subseteq A\rtimes^\sigma_{\alpha,r}G$, given by
\begin{equation}\label{maintw.1}
H\mapsto A\rtimes^\sigma_{\alpha,r}H.
\end{equation}
\end{thm}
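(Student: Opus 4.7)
My plan is to follow the structure of the proof of Theorem \ref{intermediate}, using Proposition \ref{twmod} in place of Proposition \ref{supp}(i) and handling the cocycle $\sigma$ in a few extra spots. Injectivity of $H\mapsto A\rtimes^\sigma_{\alpha,r}H$ is immediate: if $h\in H_1\setminus H_2$, then $\lambda_\sigma(h)\in A\rtimes^\sigma_{\alpha,r}H_1$ has Fourier support $\{h\}\not\subseteq H_2$, so it cannot lie in $A\rtimes^\sigma_{\alpha,r}H_2$. For surjectivity, fix an intermediate C$^*$-algebra $B$ and set $H=\mathrm{supp}(B)\subseteq G$. Proposition \ref{twmod} gives $A\lambda_\sigma(h)\subseteq B$ for each $h\in H$, so each $\lambda_\sigma(h)$ itself lies in $B$.

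I would next verify that $H$ is a subgroup of $G$. The identity is in $H$ since $\lambda_\sigma(e)=1\in A\subseteq B$. For $g_1,g_2\in H$, relation \eqref{tw.6} gives
\[
\lambda_\sigma(g_1g_2)=\pi_\alpha(\sigma(g_1,g_2))^*\lambda_\sigma(g_1)\lambda_\sigma(g_2)\in B,
\]
so $g_1g_2\in H$. Applying \eqref{tw.6} to $(g^{-1},g)$ yields $\lambda_\sigma(g^{-1})\lambda_\sigma(g)=\pi_\alpha(\sigma(g^{-1},g))$, whence
\[
\lambda_\sigma(g^{-1})=\pi_\alpha(\sigma(g^{-1},g))\lambda_\sigma(g)^*\in B,
\]
so $g^{-1}\in H$. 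At this point Proposition \ref{twmod} immediately yields $A\rtimes^\sigma_{\alpha,r}H\subseteq B$, and it remains to prove the reverse inclusion.

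For $B\subseteq A\rtimes^\sigma_{\alpha,r}H$, I would produce a normal conditional expectation $E_H^\sigma:A^{**}\rtimes^\sigma_{\alpha^{**},vn}G\to A^{**}\rtimes^\sigma_{\alpha^{**},vn}H$ that is continuous in the Bures $B$-topology and that kills the Fourier coefficients $x_g$ for $g\notin H$. The cleanest route is to transfer the untwisted expectation through Lemma \ref{untwist}: the isomorphism $\phi$ identifies the stabilized twisted crossed product $(A^{**}\vntensor\B(\ell^2(G)))\rtimes^{\sigma\otimes 1}_{\alpha^{**}\otimes\mathrm{id},vn}G$ with the untwisted $(A^{**}\vntensor\B(\ell^2(G)))\rtimes_{\beta,vn}G$, where \cite[Lemma 3.1(ii)]{CaSm2} supplies a $B$-continuous normal conditional expectation onto the $H$-summand; formulas \eqref{unt.2} and \eqref{unt.3} show that $\phi$ intertwines the two Fourier structures, so pulling back (after restricting to the embedded copy of $A^{**}\rtimes^\sigma_{\alpha^{**},vn}G$) produces $E_H^\sigma$ with the desired properties. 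Since $\mathrm{supp}(B)\subseteq H$, every $x\in B$ has Fourier coefficients vanishing off $H$, and $B$-continuity gives $E_H^\sigma(x)=x\in A\rtimes^\sigma_{\alpha,r}H$. Together with the reverse inclusion already established, this yields $B=A\rtimes^\sigma_{\alpha,r}H$.

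The main obstacle is constructing $E_H^\sigma$ and checking its $B$-topology continuity. A direct imitation of the $W$-construction from \cite[(2.4)]{CaSm} would require a twisted analog of the disentangling unitary, which is awkward because $\sigma$ couples $A$ with $G$; routing the construction through Lemma \ref{untwist} bypasses this difficulty by reducing it to the untwisted machinery of Section \ref{main} and of \cite{CaSm2}.
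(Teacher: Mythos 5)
Your proposal is correct and follows essentially the same route as the paper: Proposition \ref{twmod} replaces Proposition \ref{supp}(i) for the inclusion $A\rtimes^\sigma_{\alpha,r}H\subseteq B$, and the $B$-continuous conditional expectation is obtained by conjugating the untwisted expectation through the untwisting isomorphism $\phi$ of Lemma \ref{untwist} and restricting to the embedded copy of $A^{**}\rtimes^\sigma_{\alpha^{**},vn}G$. Your explicit verification via \eqref{tw.6} that the support of $B$ is a subgroup is a detail the paper leaves implicit, and it is carried out correctly.
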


\begin{proof}
Two changes to the proof in the untwisted case (Theorem \ref{intermediate}) are necessary. The first is that Proposition \ref{twmod} is
used in place of Proposition \ref{supp}. The second is  to show that there is a $B$-continuous conditional expectation $E_H$ of
$A^{**}\rtimes^\sigma_{\alpha^{**},vn}G$ onto $A^{**}\rtimes^\sigma_{\alpha^{**},vn}H$ which, on Fourier series $x=\sum_{g\in G}x_g\lambda_\sigma(g)$, has the form
\begin{equation}\label{maintw.2}
E_H(x)=\sum_{h\in H}x_h\lambda_\sigma(h),
\end{equation}
since the proof of this fact in the untwisted case is no longer valid.

Let $\phi:(A^{**}\vntensor\B(\ell^2(G)))\rtimes^{\sigma\otimes 1}_{\alpha^{**}\otimes id,vn}G\to
(A^{**}\vntensor \B(\ell^2(G)))\rtimes_{\beta,vn}G$ be the isomorphism constructed in Lemma \ref{untwist}. As in Theorem \ref{intermediate}, there is a $B$-continuous conditional expectation
$\hat{E}_H$ of $(A^{**}\vntensor\B(\ell^2(G)))\rtimes_{\beta,vn}G$ onto $(A^{**}\vntensor\B(\ell^2(G)))\rtimes_{\beta,vn}H$, so we may define a $B$-continuous conditional expectation
\begin{equation}\label{maintw.3}
E_H: 
(A^{**}\vntensor\B(\ell^2(G)))
\rtimes^{\sigma\otimes 1}_{\alpha^{**}\otimes id,vn}G\to
(A^{**}\vntensor\B(\ell^2(G)))
\rtimes^{\sigma\otimes 1}_{\alpha^{**}\otimes id,vn}H
\end{equation} 
by $E_H=\phi^{-1}\circ \hat{E}_H\circ\phi$. Using \eqref{unt.2} and \eqref{unt.3},
and identifying $A^{**}$ with $A^{**}\otimes
1\subseteq A^{**}\vntensor \B(\ell^2(G))$,
 $E_H$ restricts to a conditional expectation of $A^{**}\rtimes^{\sigma}_{\alpha^{**},vn}G$ onto
$A^{**}\rtimes^{\sigma}_{\alpha^{**},vn}H$, satisfying \eqref{maintw.2}. Having established the existence of this conditional expectation, the proof now follows that of Theorem \ref{intermediate}.
\end{proof}

\begin{center}
\begin{tabular}{cc}
Department of Mathematics & Department of Mathematics\\
Vassar College& Texas A\&M University\\
Poughkeepsie, NY 12604 & College Station, TX 77843\\
&\\
jacameron@vassar.edu & rsmith@math.tamu.edu

\end{tabular}
\end{center}

\end{document}